\newcommand*{\qedw}{\hfill\ensuremath{\square}}%
\newdefinition{theorem}{Theorem}
\newdefinition{proposition}{Proposition}
\newdefinition{lemma}{Lemma}
\newdefinition{corollary}{Corollary}
\newdefinition{definition}{Definition}
\newdefinition{assumption}{Assumption}
\newdefinition{remark}{Remark}
\newdefinition{example}{Example}
\newproof{proof}{Proof}
\newproof{proofLemmaADTAAT}{Proof of Lemma \ref{lemma:ADT/AAT}}
\newproof{proofTheoremPTCL}{Proof of Theorem \ref{thm:PTCL}}
\def\ps@pprintTitle{%
  \let\@oddhead\@empty
  \let\@evenhead\@empty
  \def\@oddfoot{\reset@font\hfil\thepage\hfil}
  \let\@evenfoot\@oddfoot
}
\journal{~}
\begin{document}

\begin{frontmatter}



\title{Prescribed-Time and Hyperexponential Concurrent Learning with Partially Corrupted Datasets: A Hybrid Dynamical Systems Approach\tnoteref{funding}}
\tnotetext[funding]{This work was supported in part by NSF grant ECCS CAREER 2305756 and AFOSR grant FA9550-22-1-0211.}

\author[1]{Daniel E. Ochoa}
\ead{dochoatamayo@ucsc.edu}
\affiliation[1]{organization={University of California Santa Cruz},
addressline={606 Eng. Loop},
postcode={95064},
city={Santa Cruz, CA},
country={USA}}

\author[2]{\texorpdfstring{Jorge I. Poveda\corref{corresponding}}{Jorge I. Poveda}}
\ead{poveda@ucsd.edu}
\affiliation[2]{organization={University of California San Diego},
addressline={9500 Gilman Dr},
postcode={92093},
city={La Jolla, CA},
country={USA}}

\cortext[corresponding]{Corresponding author}

\begin{abstract}
    We introduce a class of concurrent learning (CL) algorithms designed to solve parameter estimation problems with convergence rates ranging from hyperexponential to prescribed-time while utilizing alternating datasets during the learning process. The proposed algorithm employs a broad class of dynamic gains, from exponentially growing to finite-time blow-up gains, enabling either enhanced convergence rates or user-prescribed convergence time independent of the dataset's richness. The CL algorithm can handle applications involving switching between multiple datasets that may have varying degrees of richness and potential corruption. The main result establishes convergence rates faster than any exponential while guaranteeing uniform global ultimate boundedness in the presence of disturbances, with an ultimate bound that shrinks to zero as the magnitude of measurement disturbances and corrupted data decreases. The stability analysis leverages tools from hybrid dynamical systems theory, along with a dilation/contraction argument on the hybrid time domains of the solutions. The algorithm and main results are illustrated via a numerical example.
\end{abstract}

\begin{keyword}
  Hybrid dynamical systems, Concurrent Learning, System Identification.
\end{keyword}

\end{frontmatter}

\section{Introduction}

Concurrent Learning (CL) is a data-driven framework that is suitable for the design of estimation and learning dynamics in a variety of applications where persistence of excitation (PE) conditions are not feasible \cite{chowdhary2010concurrent}. Examples include parameter estimation problems in batteries \cite{ochoa2021accelerated}, exoskeleton robotic systems \cite{casas2023switched}, extremum seeking \cite{poveda2021data}, excavating robots \cite{greene2021simultaneous}, and reinforcement learning  \cite{ochoa2022acceleratedADP,chowdhary2010concurrent}. In these applications, datasets containing past \emph{recorded} measurements of the relevant signals within the systems are typically available for estimation purposes. When these datasets are considered to be ``sufficiently rich'', they can be integrated into dynamic estimation algorithms to achieve (uniform) exponential convergence to the unknown parameters in the absence of PE. While these techniques have been recently enhanced via non-smooth tools to achieve finite-time and fixed-time convergence results \cite{ochoa2021accelerated,rios2017time,tatari2021fixed}, most CL techniques suffer from two main limitations that are common in practice, see \cite[Sec. 4]{ochoa2021accelerated},\cite{casas2023switched}.  First, the rate of convergence is directly related to the ``level of richness'' of the dataset used by the algorithm. This is true in exponentially convergent CL algorithms \cite{chowdhary2010concurrent}, as well as in finite-time and fixed-time convergence approaches \cite{ochoa2021accelerated,rios2017time,tatari2021fixed}. Second, in many practical applications, CL algorithms do not use a single dataset during the learning process, but rather multiple datasets obtained at different time instants and exhibiting different levels of informativity. Moreover, in real-world scenarios, these datasets might be contaminated with measurement noise, sensor failures, or malicious attacks. This challenge is particularly significant in Internet-of-Things (IoT) and edge computing systems, where datasets are transmitted remotely due to limited onboard storage or the need for remote pre-processing. Such remote transmission might expose the data to man-in-the-middle attacks. Incorporating both the switching nature and potential corruption of these datasets into the convergence analysis of CL algorithms is thus an essential task to inject confidence in the applicability of CL techniques in practical settings.\smallbreak

In this paper, we introduce a novel CL architecture that aims to simultaneously tackle the above two challenges. The proposed algorithm has two main features: First, instead of relying on momentum \cite{ochoa2021accelerated}, non-Lipschitz vector fields \cite{ochoa2021accelerated,tatari2021fixed}, or resetting techniques \cite{le2022concurrent} for improved transient performance, we extend our previous work on blow-up gains \cite{switchedPT} to consider a broader class of dynamic gains. These range from exponentially growing gains yielding hyperexponential convergence rates, to a bigger class of finite-time blow-up gains that enable convergence independent of the initial estimate and dataset richness, allowing the user to \emph{either enhance the exponential convergence rate or to fully prescribe the convergence time}. Second, unlike standard results in the literature of continuous-time CL, we incorporate switching between multiple datasets into the algorithm, where datasets exhibit varying degrees of richness and potential corruption. This is achieved using a suitable \emph{data-querying hybrid automaton} that generates switching signals satisfying generalized average dwell-time and activation-time constraints. Building upon our previous work on PT stability in hybrid systems \cite{switchedPT}, these constraints generalize those in the preliminary version of this work \cite{ochoa2024hybridADHS} to accommodate both finite-time blow-up and exponentially growing gains.\smallbreak

The proposed technique is inspired by recent advances in prescribed-time (PT)  \cite{Orlov2022,Song2023} and hyperexponential (HE) \cite{ochoa2024hybridADHS} adaptive control and regulation, which have remained mostly unexplored in the setting of CL. In the proposed Switched Prescribed-Time and Hyperexponential Concurrent Learning (SPTHE-CL) 
algorithms, only \emph{some} of the datasets are assumed to be sufficiently rich, and at any given time only \emph{one} dataset can be used by the algorithm. This setting is common in reinforcement learning in the context of mini-batch optimization \cite{stapor2022mini}, but it has remained unexplored in the context of CL.\smallbreak

Our main result, presented in Theorem \ref{thm:PTCL}, is derived using tools from the hybrid dynamical system's literature \cite{bookHDS}, and a dilation/contraction argument on the hybrid time domains of the solutions, recently introduced in \cite{ochoa2024dynamic} for the analysis of hybrid dynamic inclusions that incorporate dynamic gains. To the best of our knowledge, the proposed scheme is the first CL algorithm that achieves PT and HE convergence using switching datasets with potentially corrupted data. \smallbreak

The rest of this paper is organized as follows. In Section 2 we present the notation and preliminaries on hybrid dynamical systems. Section 3 presents the main SPTHE-CL algorithm and the main result. Section 4 presents the analysis and the proofs. Section 5 presents a numerical example, and Section 6 ends with the conclusions. 

\emph{Related Work:} The results presented in this paper introduce three key modifications over previous contributions by the authors while providing comprehensive proofs that subsume and generalize the preliminary results in the conference version of this work \cite{ochoa2024hybridADHS}. First, we broaden the framework for CL algorithms with dynamic gains presented in the conference version by introducing a more general class of gains that unifies hyperexponential and prescribed-time convergence. Second, we prove convergence guarantees for scenarios with corrupted datasets, addressing a critical gap in existing CL literature. Third, we extend the results from \cite{ochoa2024dynamic} to handle input-to-state stability (ISS) bounds, establishing new connections between target and nominal hybrid dynamics employing dynamic gains.
While our analysis builds upon and extends the theoretical tools for PT-stability in set-valued hybrid dynamical systems developed in \cite{switchedPT}, we emphasize that \cite{switchedPT} neither addresses concurrent learning nor parameter estimation problems. Furthermore, it does not present the unified treatment of hyperexponential and prescribed-time convergence introduced here.

\section{Preliminaries}
Given a closed set $\mathcal{A}\subset\mathbb{R}^n$ and a vector $x\in\mathbb{R}^n$, we use $|x|_{\mathcal{A}}\coloneqq\min_{s\in\mathcal{A}}\|x-s\|_2$ to denote the distance from $x$ to $\mathcal{A}$, where $\|\cdot\|_2$ denotes the 2-norm.  
To simplify notation, for two (or more) vectors $u,v \in \mathbb{R}^{n}$, we write $(u,v)=[u^{\top},v^{\top}]^{\top}$ to denote their concatenation. Also, given a set $\mathcal{O}\subset\mathbb{R}^n$, we use $\mathbb{I}_{\mathcal{O}}(\cdot)$ to denote the indicator function, which satisfies $\mathbb{I}_{\mathcal{O}}(x)=1$ if $x\in \mathcal{O}$, and $\mathbb{I}_{\mathcal{O}}(x)=0$ if $x\notin \mathcal{O}$. A function $\gamma:\mathbb{R}_{\ge 0} \to\mathbb{R}_{\ge0}$ is of class $\mathcal{K}$ if it is continuous, strictly increasing, and satisfies $\gamma(0)=0$. A function $\beta:\mathbb{R}_{\geq0}\times\mathbb{R}_{\geq0}\to\mathbb{R}_{\geq0}$ is of class $\mathcal{K}\mathcal{L}$ if it is nondecreasing in its first argument, nonincreasing in its second argument, $\lim_{r\to0^+}\beta(r,s)=0$ for each $s\in\mathbb{R}_{\geq0}$, and  $\lim_{s\to\infty}\beta(r,s)=0$ for each $r\in\mathbb{R}_{\geq0}$. A function $\tilde{\beta}:\mathbb{R}_{\ge 0}\times \mathbb{R}_{\ge 0}\times \mathbb{R}_{\ge0}\to\mathbb{R}_{\ge 0}$ belongs to class $\mathcal{KLL}$ if for every $s\in\mathbb{R}_{\geq0}$, $\tilde{\beta}(\cdot, s, \cdot)$ and $\tilde{\beta}(\cdot, \cdot, s)$ belong to class $\mathcal{KL}$ \cite{Cai2009}. A set-valued map $F:\mathbb{R}^n\rightrightarrows\mathbb{R}^n$ associates every point in $\mathbb{R}^n$ with a subset of $\mathbb{R}^n$. \medbreak

\emph{Hybrid Dynamical Systems: } To model our algorithms and analyze the influence of dynamic gains while accounting for external disturbances, in this paper, we will work with hybrid dynamical systems (HDS) aligned with \cite{bookHDS,sanfelice2021hybridBook}, and described by the following inclusions:
\begin{subequations}\label{eq:HDS0}
    \begin{align}
        &(z, u)\in C\coloneqq C_z\times \mathbb{R}^m~~~~~~~~~~~\dot{z}\in F(z, u),\label{HDS0:flow}\\[3pt]
        &(z, u)\in D\coloneqq D_z\times \mathbb{R}^m~~~~~~~~~~z^+\in G(z, u), \label{HDS0:jump}
    \end{align}
\end{subequations}
where $z\in\mathbb{R}^n$ is the state of the system, $u\in\mathbb{R}^m$ represents an input or disturbance, $F:\mathbb{R}^{n}\times \mathbb{R}^m\rightrightarrows\mathbb{R}^{n}$ is called the flow map, $G:\mathbb{R}^{n}\times \mathbb{R}^m\rightrightarrows\mathbb{R}^n$ is called the jump map, $C\subset\mathbb{R}^{n}\times \mathbb{R}^{m}$ is called the flow set, and $D\subset\mathbb{R}^{n}\times \mathbb{R}^m$ is called the jump set. To represent a HDS with the above data we use the notation $\mathcal{H}=(C,F,D,G)$.\smallbreak
System \eqref{eq:HDS0} has solutions of the form $(z,u)$, where $z$ is a hybrid arc and $u$ is a hybrid input \cite[Definition 2.27]{sanfelice2021hybridBook}. These solutions are parameterized by two indices: a continuous-time index $t\in\mathbb{R}{\geq0}$, which increases continuously during flows, and a discrete-time index $j\in\mathbb{Z}{\geq0}$, which increments by one at each jump. Therefore, solutions  to \eqref{eq:HDS0} are defined on \emph{hybrid time domains} (HTDs). A subset $E\subset\mathbb{R}_{\geq0}\times\mathbb{Z}_{\geq0}$ is called a \textsl{compact} HTD if $E=\cup_{j=0}^{J-1}([t_j,t_{j+1}],j)$ for some finite sequence of times $0=t_0\leq t_1\ldots\leq t_{J}$. The set $E$ is a HTD if for all $(T,J)\in E$, $E\cap([0,T]\times\{0,\ldots,J\})$ is a compact HTD. For further definitions of solutions to HDS, their connections to HTDs, and further details on models of the form \eqref{eq:HDS0} we refer the reader to \cite[Chapter 2]{bookHDS} and \cite[Chapter 2]{sanfelice2021hybridBook}.\smallbreak
A hybrid solution pair $(z,u)$ satisfies $\text{dom}(z)=\text{dom}(u)$ and is said to be maximal if it cannot be further extended. This does not necessarily imply that $\text{sup}_t\text{dom}(z)=\infty$, or that $\text{sup}_j\text{dom}(z)=\infty$, although at least one of these two conditions should hold when $z$ is complete. We denote by $\mathcal{S}_\mathcal{H}(K)$ the set of all maximal solution pairs $(z,u)$ to $\mathcal{H}$ with $z(0,0)\in K\subset \mathbb{R}^n$. For $K=\{z_0\}$, we write $\mathcal{S}_{\mathcal{H}}(z_0)$, and let $(z,u)\in \mathcal{S}_{\mathcal{H}}$ indicate that $(z,u)$ is a maximal solution pair to $\mathcal{H}$ when no set $K$ is specified.  To simplify notation, in this paper we use $|u|_{(t,j)}=\sup_{\substack{(0,0)\leq(\tilde{t},\tilde{j})\leq (t,j)\\(t,j)\in\text{dom}(z)}}\left|u(\tilde{t},\tilde{j})\right|$, and we use $|u|_{\infty}$ to denote  $|u|_{(t,j)}$ when $t+j\to\infty$. \medbreak 

\emph{Dynamic Gains:} The convergence properties of the proposed class of CL algorithms in this paper will be achieved through the use of a suitable class of dynamic gains. To model the effect of these gains in our algorithms, we use HDS of the form \eqref{eq:HDS0} with state $z=(x,\tau, \mu)$, where $x\in \mathbb{R}^{n_x}$ contains the states of the learning dynamics, including logical and auxiliary states, $\tau\in \mathbb{R}_{\ge 0}$ is used to model the evolution of regular time, and $\mu\in \mathbb{R}_{\ge 1}$ denotes the dynamic gain. Additionally, we specialize the data of the HDS as follows:
\begin{subequations}\label{eq:HDS^Time^Gain}
    \begin{align}
        C_z&\coloneqq C_x\times \mathbb{R}_{\ge 0}\times \mathbb{R}_{\ge 1},\qquad F(z,u)\coloneqq 
            \mu\cdot F_x(x,u)\times \{1\}\times \{F_{\mu,\ell}(\mu)\},\\[3pt]
        D_z&\coloneqq D_x\times \mathbb{R}_{\ge 0}\times \mathbb{R}_{\ge 1},\qquad G(z,u) \coloneqq
            G_x(x,u)\times
            \{\tau\}\times
            \{\mu\},
    \end{align}
\end{subequations}
where the sets $C_x,D_x\subset \mathbb{R}^{n_x}$ and the set-valued maps $F_x,G_x:\mathbb{R}^{n_x}\times \mathbb{R}^m\rightrightarrows \mathbb{R}^{n_x}$ are to be defined, and $\ell\in [1,\infty]$ is a tunable parameter that chooses the evolution of the dynamic gain $\mu$ according to a particular flow-map $F_{\mu,\ell}:\mathbb{R}_{\ge1}\to \mathbb{R}_{\ge 1}$. Given a tunable constant $\Upsilon>0$ and $\ell\in\mathbb{R}_{\ge1}\cup\{\infty\}$, we define this map, for each $\mu\in \mathbb{R}_{\ge1}$, as
\begin{equation}\label{eq:muFlowmaps}
    \begin{split}
       F_{\mu,1}(\mu)&\coloneqq \frac{\mu}{\Upsilon},\\
       F_{\mu,\ell}(\mu)&\coloneqq\frac{\ell}{\ell-1}\frac{\mu^{2-\frac{1}{\ell}}}{\Upsilon},~ \text{for }\ell>1,\text{ and }\\
       F_{\mu,\infty}(\mu)&\coloneqq \lim_{\ell\to \infty}F_{\mu,\ell}(\mu)= \frac{\mu^2}{\Upsilon}.
    \end{split}
\end{equation}
These flow maps generate gains that exhibit behavior ranging from exponential growth ($\ell=1$) to finite-time blow-up ($\ell>1$), with $\ell=\infty$ recovering the standard prescribed-time case studied in the conference version of this paper \cite{ochoa2024hybridADHS}. The following lemma, which follows by direct integration, characterizes the solutions of the differential equations generated by these flow maps.
\begin{lemma}(Dynamic Gains)\label{lemma:DynamicGain}
    Consider the family of gain ordinary differential equations (ODEs): 
    \begin{equation}\label{muODE}
    \mu\in\mathbb{R}_{\geq1},~~~~\dot{\mu} = F_{\mu,\ell}(\mu),
    \end{equation}
    where $F_{\mu,\ell}$ is given in \eqref{eq:muFlowmaps}. For each $\mu_0\in\mathbb{R}_{\geq1}$, the unique solutions to \eqref{muODE} from $\mu_0$ are given by:
    \begin{equation}\label{eq:def:dynamicGains}        
        \begin{split}
            \mu_1(t) &= \mu_0e^{t/\Upsilon},\\
            \mu_\ell(t) &= \frac{\Upsilon^{\tfrac{\ell}{\ell-1}}}{\left(\Upsilon\cdot \mu_0^{\tfrac{1-\ell}{\ell}} - t \right)^{\tfrac{\ell}{\ell-1}}},~~\ell>1,\\
            \mu_\infty(t) &= \frac{\Upsilon}{\Upsilon\cdot\mu_0^{-1} - t},
        \end{split}
    \end{equation}
    for all  $t\in\left[0,T_{\mu_0,\ell}\right)$, where  $T_{\mu_0,0}\coloneqq\infty$, $T_{\mu_0,\ell}\coloneqq\Upsilon\mu_0^{(1-\ell)/\ell}$ for $\ell>1$, $T_{\mu_0,\infty}\coloneqq\Upsilon/\mu_0$, and $t\mapsto\mu_\ell(t)$ is continuous in its domain for all $\ell\in\mathbb{R}_{\ge1}\cup\{\infty\}$. Moreover, $\mu_\ell(t)$ is strictly increasing and satisfies $\lim_{t\to T_{\mu_0,\ell}}\mu_\ell(t)=\infty$ for all $\ell\in \mathbb{R}_{\ge0}\cup\{\infty\}$.\qedw
\end{lemma}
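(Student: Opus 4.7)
The plan is to obtain each of the three closed-form expressions by direct separation of variables, and then read off the continuity, strict monotonicity, and blow-up properties from the resulting formulas. Because $F_{\mu,\ell}$ is $C^{1}$ on $(0,\infty)$ and strictly positive for $\mu\geq 1$, the Picard--Lindel\"of theorem guarantees local existence and uniqueness of solutions from any $\mu_{0}\in\mathbb{R}_{\geq 1}$, and the sign condition $\dot\mu>0$ immediately yields strict monotonicity on the maximal interval of existence.

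First I would dispatch the $\ell=1$ case, which is the linear scalar ODE $\dot\mu=\mu/\Upsilon$ whose unique solution from $\mu_{0}$ is $\mu_{0}e^{t/\Upsilon}$, defined for all $t\geq 0$ so that $T_{\mu_{0},1}=\infty$. For $\ell>1$, I would separate variables as $\mu^{1/\ell-2}\,d\mu = \tfrac{\ell}{(\ell-1)\Upsilon}\,dt$ and use the antiderivative $\int \mu^{1/\ell-2}\,d\mu = -\tfrac{\ell}{\ell-1}\mu^{(1-\ell)/\ell}$ to obtain, after integration from $\mu_{0}$ to $\mu(t)$, the identity $\mu(t)^{(1-\ell)/\ell}=\mu_{0}^{(1-\ell)/\ell}-t/\Upsilon$. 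Raising both sides to the power $\ell/(1-\ell)=-\ell/(\ell-1)$ and factoring a $1/\Upsilon$ out of the base then gives exactly the claimed expression, with blow-up time $T_{\mu_{0},\ell}=\Upsilon\mu_{0}^{(1-\ell)/\ell}$ identified as the instant at which the base vanishes. The case $\ell=\infty$ reduces to $\dot\mu=\mu^{2}/\Upsilon$, which separates to $\mu_{0}^{-1}-\mu(t)^{-1}=t/\Upsilon$ and rearranges to $\mu_{\infty}(t)=\Upsilon/(\Upsilon\mu_{0}^{-1}-t)$, with $T_{\mu_{0},\infty}=\Upsilon/\mu_{0}$. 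The limit identity $\lim_{\ell\to\infty}\mu_{\ell}(t)=\mu_{\infty}(t)$ then follows by observing that $\ell/(\ell-1)\to 1$ and $\mu_{0}^{(1-\ell)/\ell}\to\mu_{0}^{-1}$ as $\ell\to\infty$.

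Having the explicit expressions in hand, continuity on $[0,T_{\mu_{0},\ell})$, strict monotonicity, and the divergence $\mu_{\ell}(t)\to\infty$ as $t\uparrow T_{\mu_{0},\ell}$ all follow by inspection: the numerators are strictly positive constants and the denominators are strictly positive, continuous, strictly decreasing functions of $t$ that vanish precisely at $T_{\mu_{0},\ell}$. There is no real conceptual obstacle; the only delicate point is the algebraic bookkeeping of the exponent $\ell/(\ell-1)$ together with the sign of $(1-\ell)/\ell<0$ when manipulating the integrated identity, and care is needed to present the formula in the exact form stated in the lemma.
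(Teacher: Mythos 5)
Your proposal is correct and takes essentially the same route as the paper, which offers no detailed argument beyond the remark that the lemma ``follows by direct integration'': your separation-of-variables computations for each of the three cases, the identification of the blow-up times from the vanishing of the denominators, and the Picard--Lindel\"of/positivity argument for uniqueness and strict monotonicity all check out. Nothing is missing.
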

\begin{remark}
    By Lemma \ref{lemma:DynamicGain}, the state $\mu$ evolving according to $F_{\mu,\ell}$ for $\ell>1$ will always exhibit finite escape times at the time $T_{\mu_0,\ell}$. In our algorithms, this is actually a desirable feature since, by Lemma \ref{lemma:DynamicGain}, these finite escape times are ``controlled'' by $\Upsilon$ and $\mu(0)=\mu_0$. In practical implementations, the algorithm is usually stopped before $T_{\mu_0,\ell}$, inducing a small residual error. Therefore, borrowing similar terminology used in the PT-literature \cite{song2017time,Orlov2022,Song2023}, when $\ell>1$, we will refer to $\mu$ as a \emph{blow-up} gain, and to $T_{\mu_0,\ell}$ as the \emph{prescribed time} (PT). In the literature on PT regulation for ODEs \cite{song2017time,Orlov2022}, $\mu_0$ is usually taken to be equal to one. However, for the sake of generality, in this paper, all our results are expressed in terms of $\mu_0\in \mathbb{R}_{\ge 1}$ so that we can characterize the effect of the initial conditions of $\mu$ on the performance of the CL dynamics.
\end{remark}
The following diffeomorphisms $\mathcal{D}_{c,\ell}$ will characterize the time-scale transformations induced by the dynamic gains $\mu$ when employed in our algorithms. Given $c\in \mathbb{R}_{\ge 1}$, $\mathcal{D}_{c,\ell}:\mathbb{R}_{\ge 0}\to [0,T_{c,\ell})$ is defined as:
\begin{equation}\label{eq:def:diffeomorphisms}        
    \begin{split}
    \mathcal{D}_{c,1}(t) &\coloneqq ce^{t/\Upsilon} - c,\\
    \mathcal{D}_{c,\ell}(t) &\coloneqq (\ell-1)\cdot \Upsilon\left(\Upsilon^{\frac{1}{\ell-1}}\cdot(\Upsilon\cdot c^{\frac{1-\ell }{\ell}} - t)^{\frac{1}{1-\ell}} - c^{\frac{1}{\ell}}\right),~~~ \ell>1,\\
    \mathcal{D}_{c,\infty}(t) &\coloneqq \Upsilon\ln\left(\frac{\Upsilon}{\Upsilon -ct}\right),
    \end{split}     
\end{equation}
for all $t\in [0,T_{\mu_0,\ell})$, where $T_{\mu_0,\ell}$ is as defined in Lemma~\ref{lemma:DynamicGain}. For the diffeomorphism $\mathcal{D}_{c,\ell}$, when $\ell=1$, explicit formulas for its inverse and its derivative can be obtained by direct computation, as well as properties such as monotonicity with respect to $t$. The case $\ell>1$ admits the same properties, with their characterization given in \cite[Proposition 1]{switchedPT}. Among these properties, the following relation is fundamental to our results:
\begin{equation}\label{eq:matchingEquation}
   \frac{d}{dt}\mathcal{D}_{\mu_0,\ell}(t) = \left(\hat{\mu}\circ \mathcal{D}_{\mu_0,\ell}\right)(t),\quad \forall t\in [0,T_{\mu_0,\ell}),~\mu_0\in\mathbb{R}_{\ge 1},
\end{equation}
where, for all $\ell\in[1,\infty]$, $\hat{\mu}_{\ell}$ is the unique solution to the ODE $\dot{\hat{\mu}} = \frac{1}{\hat{\mu}}F_{\mu,\ell}(\mu)$ with initial condition $\mu_0$. Moreover, this solution $\hat{\mu}_{\ell}$ is complete for any $\mu_0\in \mathbb{R}_{\ge1}$.

\medbreak
\emph{Stability Notions:} The stability concepts introduced in Definition \ref{def:stability} below expand upon classical input-to-state stability (ISS) concepts by integrating recent advances in prescribed-time and hyperexponential convergence. The prescribed-time approach draws from the work of \cite{switchedPT}, which originally developed this concept for hybrid systems involving blow-up gains. The hyperexponential stability notion is adapted from \cite[Def. 5]{zimenko2023stability} and translated to the hybrid systems domain following similar ideas to those presented in \cite{switchedPT}.\medbreak

\begin{definition}\label{def:stability}
    The set $\mathcal{A}\coloneqq\mathcal{A}_x\times \mathbb{R}_{\ge 0}\times \mathbb{R}_{\ge 1}$ where $\mathcal{A}_x\subset \mathbb{R}^{n_x}$ is compact is said to be:
    \begin{itemize}
        \item \emph{Hyperexponentially Input-to-State Stable via Flows} (HE-ISS$_F$) for the HDS $\mathcal{H}$ in \eqref{eq:HDS^Time^Gain} if there exists a function $\gamma\in\mathcal{K}$, and constants $k_1,k_2>0$ such that for every $z_0=(x_0,\tau_0,\mu_0)\in\left(C_x\cup D_x\right)\times\mathbb{R}_{\ge0}\times \mathbb{R}_{\ge1}$, and every solution-pair $(z,u)\in\mathcal{S}_{\mathcal{H}}(z_0)$ the following bound holds:
        \begin{align}\label{eq:def:stability:HE}
            \displaystyle|z(t,j)|_{\mathcal{A}} &\leq k_1e^{-k_2\mathcal{D}_{\mu_0,1}(t)}e^{-k_2\cdot j}|z_0|_{\mathcal{A}} + \gamma(|u|_{(t,j)}),
        \end{align}
        for all $(t,j)\in\text{dom}(z)$. If this holds with $u\equiv0$, the set $\mathcal{A}$ is said to be \emph{Uniformly Globally Hyperexponentially Stable via Flows} (UGHE-S$_F$).
    
        \item \emph{Prescribed-Time Input-to-State Stable via Flows} (PT-ISS$_F$) for the HDS $\mathcal{H}$ if there exist $\beta\in\mathcal{KLL}$ and $\gamma\in\mathcal{K}$ such that for every $z_0=(x_0,\tau_0,\mu_0)\in \left(C_x\cup D_x\right)\times \mathbb{R}_{\ge1}$, and every solution-pair $(z,u)\in\mathcal{S}_{\mathcal{H}}(z_0)$ the following bound holds:
        \begin{align}\label{eq:def:stability:PT}
            |z(t,j)|_{\mathcal{A}} &\leq \beta(|z_0|_{\mathcal{A}},\mathcal{D}_{\mu_0,\ell}(t),j) + \gamma(|u|_{(t,j)}),
        \end{align}
        with $\ell\in(1,\infty]$, and for all $(t,j)\in\text{dom}(x)$. If this holds with $u\equiv0$, the set $\mathcal{A}$ is said to be \emph{Prescribed-Time Stable} via Flows (PT-S$_F$). \qedw
    \end{itemize}
    \end{definition}
\begin{remark}(Non-uniformity with respect to $\mu_0$)
    The constants $k_1, k_2>0$ in \eqref{eq:def:stability:HE} and the $\mathcal{KLL}$ function $\beta$ in the bound \eqref{eq:def:stability:PT} are independent of the initial conditions of $z$. However, the diffeomorphism $\mathcal{D}_{\mu_0,\ell}$ clearly depends on the initial value of $\mu$, which parameterizes the prescribed time $T_{\mu_0,\ell}$. Yet, the bounds \eqref{eq:def:stability:HE} and \eqref{eq:def:stability:PT} are uniform across the initial conditions of $x$, which is the main state of interest in the system.
\end{remark}
\section{Prescribed-Time and Hyperexponential Concurrent Learning with Switching datasets}
Consider a parameter estimation problem where the goal is to estimate $\theta^{\star}\in\mathbb{R}^n$ using real-time and past recorded measurements of the scalar signal
\begin{equation}\label{statement:parameterizedSignal}
\psi(\theta^{\star},t) = \phi(t)^\top\theta^{\star} + d(t),
\end{equation}
where $d:\mathbb{R}_{\ge0}\to\mathbb{R}$ is an unknown and Lebesgue measurable disturbance, and $\phi:\mathbb{R}_{\geq0}\to\mathbb{R}^n$ is a known \emph{regressor} that is assumed to be continuous and uniformly bounded.  

When the regressor $\phi$ satisfies a persistence of excitation (PE) condition, the typical approach to estimate $\theta$ relies on a ``standard'' recursive least-squares method \cite{Narendra:87_PE}. On the other hand, when $\phi$ does not satisfy a PE condition, but the practitioner has access to ``sufficiently rich'' past recorded data of $\phi$, concurrent learning (CL) can be used to estimate the true parameter $\theta^*$ recursively. In general, depending on their transient performance, there are three types of CL algorithms:
\begin{enumerate}
\item Classic exponentially stable CL algorithms, introduced in \cite{chowdhary2010concurrent,chowdhary2011theory}, and which achieve exponential rates of convergence proportional to the level of richness of the data. This approach extends the classic gradient-based parameter estimation algorithm using a ``data-driven'' or ``batch-based'' term that incorporates the recorded data. Other recent approaches that use initial excitation conditions (IEC) \cite{basu2019novel} and integral CL achieve similar properties \cite{le2022integral}.
\item High-order exponentially stable CL algorithms, studied in \cite{ochoa2021accelerated,le2022concurrent}, which incorporate momentum and sometimes resets (i.e., restarting) to improve transient performance. Such algorithms can achieve exponential rates of convergence proportional to the square root of the level of richness of the data, which is advantageous whenever the level of richness of the recorded data is ``low''.
\item Fixed-time stable CL algorithms (which generalized finite-time convergence methods), studied in \cite{rios2017time,ochoa2021accelerated} and \cite{tatari2021fixed}, which use non-smooth dynamics to achieve exact convergence to the true parameter before a fixed-time that is independent of the initial conditions of the algorithm, but inversely proportional to the level of richness of the data.
\end{enumerate}
As mentioned above, each of the existing approaches has different advantages and limitations. In this section, we introduce a novel CL architecture that complements the existing methods by offering new convergence properties that have not been studied before in the context of CL \cite{zimenko2023stability} and prescribed-time stability \cite{song2017time}. Such properties induce either faster-than-exponential convergence or convergence to the true parameter as time approaches a prescribed time, the latter being independent of the richness of data used by the algorithm. Moreover, we show that such properties can be achieved even when the algorithm implements, during bounded periods of time (which could be potentially unknown and persistent), data sets that are completely uninformative or corrupted. In practice, this situation emerges when the recorded data is persistently updated during the implementation of the CL algorithm, leading to different batches of available data for the algorithm, some of which might be compromised or tampered with. Since the practitioner cannot anticipate whether or not the current or new data is ``sufficiently rich'' or has been corrupted, establishing stability guarantees under sporadic implementations of uninformative or corrupted data can inject confidence in the implementation of the techniques.
\subsection{The Estimation Problem}
To estimate $\theta^{\star}$, we consider the algorithm shown in Figure \ref{fig:PTCLScheme}, where $\theta\in\mathbb{R}^n$ is the estimate of the parameter; $q\in\mathbb{Z}_{\geq1}$ is a logic switching state that is kept constant between switchings of the algorithm;  $\mu\in\mathbb{R}$ is a dynamic gain whose evolution is governed by any of the flow maps described in \eqref{eq:muFlowmaps}, and  $\tau\in\mathbb{R}_{\geq0}$ is used to model any explicit dependence on the time $t$. Between switching times, the states $(\theta,\mu,q,\tau)$ evolve according to the following continuous-time dynamics: 
%
\begin{align}\label{maindaynamicsCL}
    \dot{\theta}&=  \mu\cdot\Omega_{q}\left(\theta, \tau\right),~~~\dot{\mu}=F_{\mu,\ell}(\mu),~~~\dot{q}=0,~~~\dot{\tau}= 1,
\end{align}
where $\Upsilon>0$ is a tunable gain, and  $\Omega_q$ is a mode-dependent mapping to be characterized, and $k\in \mathbb{R}$.
%
%
%
%
\begin{figure}[t]
    \centering
    \includegraphics[width=0.65\linewidth]{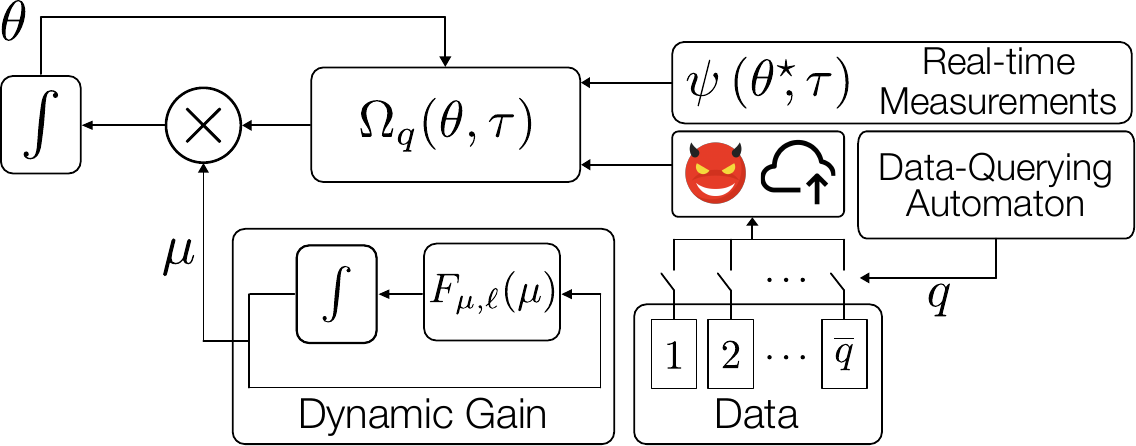}
    \caption{Block diagram of the proposed Switching Prescribed-Time and Hiperexponential Concurrent Learning algorithm. Recorded measurements (data) are transmitted remotely yet they can be susceptible to tampering or corruption.}
    \label{fig:PTCLScheme}
\end{figure}

To define the mapping $\Omega_q$ in \eqref{maindaynamicsCL}, as well as the switching dynamics of the algorithm, we let $\chi:\mathbb{R}^n\times\mathbb{R}_{\ge 0}\to \mathbb{R}^n$ be given by
\begin{equation}\label{def:Chi}
\chi(\theta, t)\coloneqq  \phi(t)\cdot\left(\phi(t)^\top\theta-\psi(\theta^{\star},t)\right),
\end{equation}
%
where $\phi$ and $\psi$ come from \eqref{statement:parameterizedSignal}. Note that $\chi(\theta, t)$ is a signal available to the practitioner at any time $t$. Indeed, in the context of concurrent learning, it is typically assumed that $(\phi,\chi)$ can be measured in real-time and that the algorithm has access to a sequence of \emph{recorded} measurements of $(\phi(\cdot),\chi(\theta,\cdot))$ taken at times $\{t_k\}$, $k\!\in\!\{1,2,\ldots, \bar{k}\}$. We are interested in CL applications where the recorded data can be segmented among multiple datasets, some of which might be ``uninformative'' or corrupted. 

\vspace{0.2cm}
\begin{remark}
In this paper, the use of the term ``recorded data'' should be considered in a more broad sense compared to traditional CL. In particular, since we will assume that the algorithm will switch between multiple data sets $\Delta_q$, a given data set implemented in the future after $T\!>\!0$ seconds could have been obtained after recording the data during the initial window of time $[0,T]$. Note that, on the other hand, this situation cannot occur in standard CL, where all recorded data must have been obtained before running the algorithm. However, to make our analysis tractable, we will impose \emph{a priori} assumptions on the data sets $\Delta_q$, including situations where such data sets are uninformative. To simplify notation, we let $\phi_{q,k}\!\coloneqq\! \phi(t_{q,k})$ and $\psi_{q,k}^\star\!\coloneqq\!\psi(\theta^\star, t_{q,k})$ denote the $kth$ regressor vector and signal measurement of the $qth$ dataset, respectively. \!\!\qedw 
\end{remark}
To formalize the model of the CL algorithm, we associate each dataset with a different value of the state $q\in \mathcal{Q}:=\{1,2,\ldots,\overline{q}\}$, with $\overline{q}\in\mathbb{Z}_{\ge 1}$. Therefore, we use $\mathcal{Q}$ to represent the set of indices of the multiple datasets, each dataset of the form $\Delta_q = (\Phi_q,\Psi_q)$, where $\Phi_q$ and $\Psi_q$ denote the $qth$ \emph{regressor data-matrix and signal data-vector}, respectively, and are defined as:
\begin{equation}\label{def:dataMatrix}
\Phi_q\coloneqq \sum_{k=1}^{\overline{k}_q}\phi_{q,k}\phi_{q,k}^\top, \quad \Psi_q\coloneqq \sum_{k=1}^{\overline{k}_q}\phi_{q,k}\psi_{q,k}^\star.
\end{equation}
The following definition, which is standard in the CL literature \cite{chowdhary2010concurrent}, formalizes the informativity properties of these datasets.

\begin{definition}(Sufficiently Rich Data)\label{definition:sufficientRich}
    A dataset $\Delta_q$ is said to be sufficiently rich (SR) if there exists $\alpha_q>0$ such that
    \begin{equation}\label{def:sufficientRich}
    \Phi_q \succeq \alpha_q I,
    \end{equation}
    where $\alpha_q$ denotes the level of richness of $\Delta_q$. If equation \eqref{def:sufficientRich} holds only with $\alpha_q=0$, then $\Delta_q$ is said to be insufficiently rich (IR). \qedw
\end{definition}
\begin{remark}\label{remark:corrupted}
    In IoT and edge computing applications, memory limitations of resource-constrained devices often necessitate remote storage and processing of datasets $\Delta_q$. During data transmission, these remote operations become vulnerable to man-in-the-middle attacks. Such attacks can corrupt the data $(\Phi_q,\Psi_q)$ of dataset $\Delta_q$, causing them to deviate from the structure defined in \eqref{def:dataMatrix}. For instance, even if $\Phi_q$ was originally sufficiently rich (SR), its positive definiteness property in \eqref{def:sufficientRich} may be lost—a scenario that could prove catastrophic for standard continuous-time CL algorithms.
    Thus, incorporating both the switching nature and potential corruption of these datasets into the convergence analysis of CL algorithms becomes essential for ensuring the practical applicability of CL techniques.\qedw
\end{remark}

In light of the considerations in Remark \ref{remark:corrupted}, we extend our analysis beyond the conventional SR or IR data classifications to include the utilization of corrupted data that may arise during operation. For simplicity, in this paper, we consider only corruption of the regressor data matrix $\Phi_q$, though the definition can be naturally extended to include corruption of the signal data vector $\Psi_q$.
\begin{definition}(Corrupted Data)
A dataset $\Delta_q$ is said to be corrupted if $\Phi_q$ is indefinite, negative definite, or $\Phi_q^\top \neq \Phi_q$. \qedw
\end{definition}
To differentiate the informative datasets from the uninformative or corrupt ones, we partition $\mathcal{Q}$ as follows:
\begin{equation*}
\mathcal{Q}=\mathcal{Q}_s\cup\mathcal{Q}_i\cup \mathcal{Q}_c,~~~\mathcal{Q}_s\cap(\mathcal{Q}_i\cup \mathcal{Q}_c)=\emptyset,~~~\mathcal{Q}_s,\mathcal{Q}_i,\mathcal{Q}_c\subset\mathbb{Z}_{\geq1},
\end{equation*}
where the modes in $\mathcal{Q}_s$ correspond to datasets that are SR, the modes in $\mathcal{Q}_i$ are IR, and the modes in $\mathcal{Q}_c$ employ corrupted data. To integrate the datasets into the CL algorithm, we let:
\begin{equation}\label{CLMap}
\begin{split}
\mathcal{R}_q(\theta)&\coloneqq \sum_{k=1}^{\overline{k}_q} \chi\left(\theta,t_{q,k}\right)=\Phi_q\theta -\Psi_q,~~\forall~q\in\mathcal{Q},\\
\Omega_q(\theta,\tau,u)&\coloneqq -k_{\text{t}}\chi\left(\theta,\tau,u\right) - k_{\text{r}}\mathcal{R}_q(\theta),
\end{split}
\end{equation}
where $k_{\text{t}},~k_{\text{r}}>0$ are tunable weights. System \eqref{maindaynamicsCL} with $\Omega_q$ as in \eqref{CLMap} describes the continuous-time dynamics of the proposed CL algorithm.
To guarantee that these dynamics can solve the parameter estimation problem we need to impose suitable conditions on the switching signals $q$. Namely, our goal is to characterize how frequently the CL algorithm can switch between multiple data sets (including those that are uninformative) such that stability and convergence are preserved. To answer this question we introduce a ``Data-Querying'' automaton that will characterize the family of switching signals that lead to stable closed-loop systems.
\subsection{The ``Data-Querying'' Automaton}
To model the switching dynamics of $q$, while simultaneously considering the goal of achieving hyperexponential (HE) or prescribed-time (PT) convergence to the true parameter $\theta^{\star}$, we consider a data-querying hybrid automaton that can be seen as an extension of the standard hybrid automaton that induces average dwell-time constraints on the switching signals \cite{cai2008smooth}. The data-querying automaton can be modeled as a hybrid dynamical system of the form \eqref{eq:HDS^Time^Gain} with no inputs, main state $y=(q,\rho_d,\rho_a)\in\mathbb{Z}_{\geq1}\times\mathbb{R}_{\geq0}\times\mathbb{R}_{\geq0}$, dynamic gain $\mu\in \mathbb{R}_{\ge1}$ and set-valued dynamics:
\begin{subequations}\label{DQAutomaton:HDS}
    \begin{align}
    (y,\mu)\in C_{\sigma}\times \mathbb{R}_{\ge1}&,~~~~~~\begin{pmatrix}\dot{y}\\\dot{\mu}
    \end{pmatrix}\in \mu\cdot F_{\sigma}(y)\times \{F_{\mu,\ell}(\mu)\},\label{DQAutomaton:flows}\\
    (y,\mu)\in D_{\sigma}\times \mathbb{R}_{\ge 1}&,~~~~\begin{pmatrix}y^+\\\mu^+
    \end{pmatrix} \in G_{\sigma}(y)\times \{\mu\},\label{DQAutomaton:jumps}
    \end{align}
where
 \begin{align}
    C_{\sigma}&\coloneqq \mathcal{Q}\times [0, N_0]\times [0, T_0],\qquad
     F_{\sigma}(y){\coloneqq} 
     \{0\}\times
     \left[0,\frac{1}{\tau_d}\right] \times
      \left(\left[0,\frac{1}{\tau_a}\right]{-}\mathbb{I}_{\mathcal{Q}_i\cup \mathcal{Q}_c}(q)
     \right),
     \label{DQAutomaton:flowmap}\\
    D_{\sigma}&\coloneqq \mathcal{Q}\times [1, N_0]\times [0, T_0],\qquad
    G_{\sigma}(y)\coloneqq \mathcal{Q}\setminus\{q\}\times
    \{\rho_d - 1\}\times
    \{\rho_a\},
    \label{DQAutomaton:jumpMap}
\end{align}
\end{subequations}
where $T_0>0$, $N_0\ge 1$, $\tau_d>0$, and $\tau_a>1$ are tunable parameters. 

\vspace{0.1cm}
 When $\mu\equiv 1$, hybrid automata of the form \eqref{DQAutomaton:HDS} are common in the study of asymptotic stability properties of switching systems under average dwell-time and average activation time constraints on the switching signals, see for example \cite{poveda2017framework,Liu_Tanwani_Liberzon2022}. However, the incorporation of the \emph{dynamic} gain $\mu$ in the continuous-time dynamics generates switching signals that differ from traditional average dwell-time signals. For example, as $t\to T_{\mu_0,\ell}$, system \eqref{DQAutomaton:HDS} allows for signals that switch at a faster rate. 
 
 The following Lemma provides some useful properties of the solutions to \eqref{DQAutomaton:HDS}. Its proof can be found in Section \ref{sec:Analysis}.
\begin{lemma}\label{lemma:ADT/AAT}
Let $y$ be a maximal solution to the HDS \eqref{DQAutomaton:HDS}, with $\mu(0,0)=\mu_0$. Then, the following holds:
\begin{enumerate}[(a)]
\item The total amount of flow time is bounded by $T_{\mu_0,\ell}$, i.e.,
\begin{align}\label{defprescribedtime}
\sup\{t\geq0:\exists~j\in\mathbb{Z}_{\geq0},(t,j)\in\text{dom}(y)\}
&\leq T_{\mu_0,\ell}.
\end{align}
\item For all $(t_1,j_1),(t_2,j_2)\in\text{dom}(y)$ such that $t_2\geq t_1$, the following Dilated Average Dwell-Time (D-ADT) condition is satisfied:
\begin{equation}\label{BUADT}
    j_2-j_1\leq \frac{1}{\tau_d}\left(\mathcal{D}_{\mu_0,\ell}(t_2) - \mathcal{D}_{\mu_0,\ell}(t_1)\right)+N_0,
\end{equation}
\item For all $(t_1,j_1),(t_2,j_2)\in\text{dom}(y)$ such that $t_2\geq t_1$, the following Dilated Average Activation Time (D-AAT)  condition is satisfied:
\begin{equation}\label{BUAAT}
\int_{t_1}^{t_2}\mu_\ell(t)\mathbb{I}_{\mathcal{Q}_i\cup \mathcal{Q}_c}(q(t,\underline{j}(t)))dt \leq \frac{1}{\tau_a}\left(\mathcal{D}_{\mu_0,\ell}(t_2)-\mathcal{D}_{\mu_0,\ell}(t_1)\right)+T_0,
\end{equation}
where $\underline{j}(t)=\min\{j\in\mathbb{Z}_{\geq1}:(t,j)\in\text{dom}(y)\}$.
\item For every hybrid arc $q:\text{dom}(q)\to\mathcal{Q}$ satisfying \eqref{BUADT} and \eqref{BUAAT} there exists a maximal solution $y$ to the HDS \eqref{DQAutomaton:HDS} having the same hybrid time domain. \qedw 
\end{enumerate}
\end{lemma}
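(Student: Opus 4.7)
The proof naturally splits into the four claims, and my plan is to handle each by reading off the evolution of the timer states $\rho_d,\rho_a$ along solutions of \eqref{DQAutomaton:HDS}, and then converting the resulting integrals into dilated-time increments via the matching identity \eqref{eq:matchingEquation}. Part (a) is essentially immediate from Lemma \ref{lemma:DynamicGain}: since $\mu^+ = \mu$ at every jump and $\dot\mu = F_{\mu,\ell}(\mu)$ during flows, the total accumulated flow time of any maximal solution equals the domain of the ODE \eqref{muODE} initialized at $\mu_0$, which is $T_{\mu_0,\ell}$, with $T_{\mu_0,1}=\infty$ covering the HE case.

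For parts (b) and (c), my strategy is to integrate the timer inclusions over an arbitrary hybrid subinterval. From \eqref{DQAutomaton:flowmap}, during flows we have $\dot\rho_d \in [0,\mu/\tau_d]$ and $\dot\rho_a \in \mu\cdot\big([0,1/\tau_a]-\mathbb{I}_{\mathcal{Q}_i\cup\mathcal{Q}_c}(q)\big)$, while \eqref{DQAutomaton:jumpMap} enforces $\rho_d^+ = \rho_d - 1$ and leaves $\rho_a$ unchanged at each jump. Summing the $j_2-j_1$ jump decrements of $\rho_d$ and integrating across the intervening flow arcs between any two hybrid times $(t_1,j_1),(t_2,j_2)\in\text{dom}(y)$ with $t_2\ge t_1$ yields
\[
\rho_d(t_2,j_2)-\rho_d(t_1,j_1) \le \tfrac{1}{\tau_d}\!\int_{t_1}^{t_2}\!\mu(s)\,ds - (j_2-j_1),
\]
\[
\rho_a(t_2,j_2)-\rho_a(t_1,j_1) \le \tfrac{1}{\tau_a}\!\int_{t_1}^{t_2}\!\mu(s)\,ds - \int_{t_1}^{t_2}\!\mu(s)\mathbb{I}_{\mathcal{Q}_i\cup\mathcal{Q}_c}(q(s,\underline{j}(s)))\,ds.
\]
Rearranging these and using the invariances $\rho_d\in[0,N_0]$ and $\rho_a\in[0,T_0]$ baked into $C_\sigma$ and $D_\sigma$ produces inequalities of exactly the form in \eqref{BUADT} and \eqref{BUAAT}, up to the replacement $\int_{t_1}^{t_2}\mu(s)\,ds = \mathcal{D}_{\mu_0,\ell}(t_2)-\mathcal{D}_{\mu_0,\ell}(t_1)$. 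This substitution is the substantive step: for $\ell\in(1,\infty]$ it follows from \eqref{eq:matchingEquation} combined with the observation that the solution $\mu_\ell$ of \eqref{muODE} coincides with $\hat\mu_\ell\circ\mathcal{D}_{\mu_0,\ell}$, and for $\ell=1$ it can be verified directly from the closed-form expressions in \eqref{eq:def:dynamicGains}.

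Part (d) I would prove constructively. Given a hybrid arc $q$ satisfying \eqref{BUADT} and \eqref{BUAAT} on some hybrid time domain, I would synthesize $\rho_d,\rho_a,\mu$ on the same domain by integrating $\dot\mu = F_{\mu,\ell}(\mu)$ from $\mu(0,0)=\mu_0$, initializing $\rho_d(0,0)=N_0$ and $\rho_a(0,0)=T_0$, letting each timer flow at the maximal admissible rate, and applying $\rho_d^+=\rho_d-1$ at every jump of $q$. The D-ADT bound then certifies that $\rho_d\ge 1$ at every scheduled jump time, so the jump map in \eqref{DQAutomaton:jumpMap} is applicable and $\rho_d$ stays in $[0,N_0]$; an analogous accounting based on the D-AAT bound keeps $\rho_a\in[0,T_0]$. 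The main obstacle throughout is the time-rescaling identity used in parts (b) and (c): once $\int\mu\,ds$ is correctly identified with $\mathcal{D}_{\mu_0,\ell}(t_2)-\mathcal{D}_{\mu_0,\ell}(t_1)$ uniformly across $\ell\in[1,\infty]$, the remainder is routine invariance bookkeeping dictated by the structure of $C_\sigma$ and $D_\sigma$.
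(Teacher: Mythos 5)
Your proof is correct in substance but follows a genuinely different route from the paper's. The paper first pulls the data-querying automaton back to a dilated-time target system $\hat{\mathcal{H}}_\sigma$ (obtained by dividing the flow map by $\mu$), invokes the \emph{standard} ADT and AAT bounds for that system by citing known results (\cite[Ex.~2.15]{bookHDS} and \cite[Lemma~7]{poveda2017framework}), and then transports those bounds back to the original time scale through the solution bijection of Lemma~\ref{lemma:EquivalenceOfSolutions:statement}; item (d) is likewise obtained by running the bijection in reverse. You instead integrate the timer inclusions for $\rho_d$ and $\rho_a$ directly in the original time scale and convert $\int_{t_1}^{t_2}\mu_\ell(s)\,ds$ into $\mathcal{D}_{\mu_0,\ell}(t_2)-\mathcal{D}_{\mu_0,\ell}(t_1)$; your identification $\mu_\ell=\hat\mu_\ell\circ\mathcal{D}_{\mu_0,\ell}$, hence $\tfrac{d}{dt}\mathcal{D}_{\mu_0,\ell}=\mu_\ell$, is exactly right and is the same identity \eqref{eq:matchingEquation} that powers the paper's change of variables in \eqref{integralAATtscale} (for $\ell=1$ the closed-form $\mathcal{D}_{c,1}$ in \eqref{eq:def:diffeomorphisms} appears to be missing a factor of $\Upsilon$ relative to \eqref{eq:matchingEquation}, but that is an inconsistency in the paper, not in your argument). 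Your approach is more elementary and self-contained, since it does not require the solution-correspondence machinery; the paper's approach buys reuse of that machinery (which is needed anyway for Theorem~\ref{thm:PTCL}) and lets it outsource all timer bookkeeping to the cited references. The one place you should be careful is item (d): the greedy ``flow at maximal admissible rate'' construction does not by itself make it obvious that $\rho_d\ge 1$ at every scheduled jump — after a long quiescent period $\rho_d$ saturates at $N_0$ and one must argue that a subsequent burst of jumps permitted by \eqref{BUADT} over \emph{every} subinterval never drives $\rho_d$ below the jump-set threshold; this is precisely the content of the converse construction in \cite[Ex.~2.15]{bookHDS} and \cite{cai2008smooth}, so you should either reproduce that argument (e.g., define $\rho_d$ via the explicit infimum formula rather than greedily) or cite it as the paper implicitly does.
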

When $\ell>1$, item (a) of Lemma \ref{lemma:ADT/AAT} states that every solution to the HDS \eqref{DQAutomaton:HDS} can flow for at most $T_{\mu_0,\ell}<\infty$ amount of time. Such time domains are typical in the context of PT- control \cite{Orlov2022,song2017time}, where the behaviors of the algorithms and controllers are mostly of interest during an initial finite window of time. In the context of PT-Stable algorithms, implementations are carried out by stopping the algorithm ``slightly'' before the prescribed time to avoid any singularity. Different techniques and heuristics for such implementations have been discussed in the literature, including the use of clipping, saturations, etc. We refer the reader to the recent works \cite{Orlov2022,Song2023,song2017time,todorovski2023practical,Hollowaylinearsystems2019} for a comprehensive discussion on practical applications and implementation. 
\begin{remark}
In \eqref{DQAutomaton:HDS}, the state $\rho_d$ acts as a timer that regulates how frequently $q$ switches between modes. Since in \eqref{DQAutomaton:flows} the flow map is multiplied by $\mu$, the rate of change of $\rho_d$ during flows is allowed to increase as $t\to T_{\mu_0,\ell}$. This leads to the D-ADT condition \eqref{BUADT}, where the right-hand side of the inequality grows to infinity as $t\to T_{\mu_0,\ell}$, thus allowing for more frequent switching as $t\to T_{\mu_0,\ell}$. \qedw 
\end{remark}
\begin{remark}
In the HDS \eqref{DQAutomaton:HDS} the state $\rho_a$ can be seen as a timer that regulates the amount of time spent by $q$ in the set $\mathcal{Q}_i\cup\mathcal{Q}_c$ compared to the time spent on $Q_s$. By construction, any maximal solution to this timer will induce the D-AAT bound \eqref{DQAutomaton:HDS}, thus limiting the proportion of time that the system spends in uninformative or corrupted modes. \qedw 
\end{remark}
\begin{remark}
We stress that the main goal of the Data-Querying automaton \eqref{DQAutomaton:HDS} is to provide a mathematical model to capture a family of switching signals under which the dynamics \eqref{maindaynamicsCL} achieve PT or HE convergence to the true parameter $\theta^\star$, i.e., it is used mainly for the purpose of analysis. Such signals might be \emph{unknown} to the practitioner who, even though might control the switching times, is aware a priori of the level of richness of the next data set $\Delta_{q^+}$. Again, this situation is common in practical applications where the data used by the CL algorithm is ``updated'' after a finite amount of time. Our hybrid model aims to capture such types of heuristics, which, to the best of our knowledge, had not been rigorously studied before the conference version of this paper \cite{ochoa2024hybridADHS}, let alone in the context of PT stability. \qedw 
\end{remark}
\subsection{Closed-Loop System and Main Result}
In this section, we study the stability properties of the complete dynamics that describe the Switching Prescribed-Time Concurrent and Hyperexponential Concurrent Learning (SPTHE-CL) algorithm shown in Figure \ref{fig:PTCLScheme}. 
The closed-loop system is a HDS of the form \eqref{eq:HDS^Time^Gain} with no inputs, state $z=(x,\tau,\mu)$, where $x=(\theta,y)\in\mathbb{R}^{n+3}$ represents the estimation and data querying states, with $\tau\in\mathbb{R}_{\ge0}$ being a variable used to model time, and $\mu\in\mathbb{R}_{\ge1}$ the dynamic gain. The system is allowed to evolve in continuous-time when the state $z$ belongs to the flow set
\begin{subequations}\label{HDS:tjTimeScale}
\begin{align}
    C_z\coloneqq \underbrace{\mathbb{R}^n\times C_{\sigma}}_{C_x}\times  \mathbb{R}_{\geq0} \times \mathbb{R}_{\ge1},
\end{align}
via the flow map $F:\mathbb{R}^{n+5}\times \mathbb{R}\rightrightarrows\mathbb{R}^{n+5}$:
\begin{align}\label{flows:tjTimeScale}
    \dot{z}=\begin{pmatrix}
    \dot{x}\\
    \dot{\tau}\\
    \dot{\mu}
    \end{pmatrix}\in F(z)\coloneqq\begin{pmatrix}
    \mu\cdot F_x(x,\tau)\\
    1\\
    F_{\mu,\ell}(\mu)
    \end{pmatrix},\quad \text{where}\quad      F_x(x,\tau)\coloneqq\begin{pmatrix}
        \Omega_q(\theta,\tau)\\
        F_{\sigma}(y)
        \end{pmatrix},
\end{align}
$\Omega_q$ is the learning map introduced in \eqref{CLMap}, and $C_\sigma$ and $F_\sigma$ are the flow set and flow map of the data-querying automaton defined in \eqref{DQAutomaton:flowmap}.
The system is allowed to evolve in discrete-time when the complete state $z$ belongs to the jump set
    \begin{align}
        D_z\coloneqq \underbrace{\mathbb{R}^n\times D_{\sigma}}_{D_x}\times  \mathbb{R}_{\geq0} \times \mathbb{R}_{\ge1},
    \end{align}
according to the jump map $G:\mathbb{R}^{n+5}\times\mathbb{R}\rightrightarrows\mathbb{R}^{n+5}$:
    \begin{align}
    z^+=\begin{pmatrix}
    x^+\\
    \tau^+\\
    \mu^+
    \end{pmatrix}\in G(z)\coloneqq\begin{pmatrix}
    G_x(x)\\
    \tau\\
    \mu
    \end{pmatrix},\quad\text{where}\quad  G_x(x)\coloneqq\begin{pmatrix}
        \theta\\
        G_{\sigma}(y)
        \end{pmatrix},
    \end{align}
\end{subequations}
and $D_\sigma$ and $G_{\sigma}$ are the jump set and the flow map of the data-querying automaton defined in \eqref{DQAutomaton:jumpMap}.
We can now state the main result of this paper, which provides HE and PT convergence results of the SPTHE-CL dynamics towards the true parameter $\theta^\star$. The proof leverages Lemma \ref{lemma:ADT/AAT} to establish HE-ISS$_F$ and PT-ISS$_F$ of a suitable surrogate hybrid dynamical system whose inputs capture the effects of disturbances and of corrupted data, and can be found in Section \ref{sec:Analysis}.
\begin{theorem}\label{thm:PTCL}
    Consider the SPTHE-CL algorithm in $\eqref{HDS:tjTimeScale}$ and suppose that:
    \begin{enumerate}[(a)]
    \item The set $\mathcal{Q}_s$ is not empty.
    \item The D-ADT and D-AAT parameters satisfy $\tau_d>0$ and $\tau_a> 1 + \frac{\varpi}{k_{\text{r}}\underline{\alpha}}$, where $\underline{\alpha}=\min_{q\in \mathcal{Q}_s}\alpha_q$, and $\varpi\coloneqq 1+ \max_{q\in \mathcal{Q}_c}\|\Phi_q\|$.
    \end{enumerate}
   Then, there exist $\kappa_1,\kappa_2>0$ such that for every solution $(z,u)=((\theta,y),\tau,\mu)$ to \eqref{HDS:tjTimeScale}, and every $\ell\in [1,\infty]$ the estimate $\theta$ satisfies:
    \begin{equation}\label{thm:bound}
    |\theta(t,j)-\theta^{\star}|\leq \kappa_1e^{-k_2\mathcal{D}_{\mu_0,\ell}(t)}e^{-\kappa_2j}\left|\vartheta_0\right|+\gamma_1\left(\sup_{0\le s\le t}|d(t)|\right) + \gamma_2\left(|d_r|\right) + \gamma_3\left(\sup_{q\in\mathcal{Q}_c}|\Phi_q \theta^\star - \Psi_q|\right),
    \end{equation}
    for all $(t,j)\in \text{dom}(z)$, where $\vartheta_0 = \theta(0,0) -\theta^{\star}$, $d_r$ is a vector that collects the noise in the recorded data for SR and IR datasets, and $\gamma_i$ is a positive definite function for $i\in\{1,2,3\}$.\qedw
\end{theorem}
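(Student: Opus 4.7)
\textbf{Proof Plan for Theorem \ref{thm:PTCL}.}

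The plan is to introduce error coordinates, absorb disturbances and corruption mismatches into a single exogenous input, apply the time-scale transformation induced by the diffeomorphism $\mathcal{D}_{\mu_0,\ell}$ to convert the closed-loop system into a ``nominal'' hybrid system with $\mu\equiv 1$, analyze ISS of that nominal system via a quadratic Lyapunov function and the D-ADT/D-AAT bounds from Lemma \ref{lemma:ADT/AAT}, and finally translate the ISS bound back to the original time scale. First I would define the error $\vartheta\coloneqq \theta-\theta^\star$ and use \eqref{statement:parameterizedSignal}--\eqref{CLMap} to write
\begin{equation*}
\dot{\vartheta}=\mu\cdot\left[-k_{\mathrm{t}}\phi(\tau)\phi(\tau)^\top\vartheta-k_{\mathrm{r}}\Phi_q\vartheta+k_{\mathrm{t}}\phi(\tau)d(\tau)+k_{\mathrm{r}}(\Psi_q-\Phi_q\theta^\star)\right],
\end{equation*}
so that the last two terms form an exogenous input $u$ whose norm is dominated by $|d(\tau)|$, the recorded-noise vector $d_r$ (for $q\in\mathcal{Q}_s\cup\mathcal{Q}_i$, where $\Psi_q-\Phi_q\theta^\star$ only reflects measurement noise in the recorded data), and $\sup_{q\in\mathcal{Q}_c}|\Phi_q\theta^\star-\Psi_q|$ (for corrupted modes). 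This recasts \eqref{HDS:tjTimeScale} as a HDS in the framework of \eqref{eq:HDS^Time^Gain} with input $u$, target set $\mathcal{A}=\{0\}\times C_\sigma\times\mathbb{R}_{\geq 0}\times\mathbb{R}_{\geq 1}$.

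Next I would apply the time dilation $s=\mathcal{D}_{\mu_0,\ell}(t)$. By \eqref{eq:matchingEquation}, $ds/dt=\hat\mu\circ\mathcal{D}_{\mu_0,\ell}(t)$, which exactly cancels the prefactor $\mu$ in the flow map, yielding a nominal hybrid system with state $(\vartheta,y,\tau)$, no dynamic gain, and whose flow and jump data inherit from $F_\sigma,G_\sigma$. Crucially, Lemma \ref{lemma:ADT/AAT}(b)--(c) shows that in the transformed time variable $s$ the switching signal $q$ satisfies \emph{standard} average dwell-time $(\tau_d,N_0)$ and average activation-time $(\tau_a,T_0)$ constraints with respect to the uninformative and corrupted modes. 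This is the step where the dilation/contraction argument from \cite{ochoa2024dynamic} is invoked to justify the equivalence of stability in the two time scales and the preservation of solution completeness.

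Then I would establish HE-ISS$_F$ of the nominal system using the common candidate $V(\vartheta)=\tfrac12|\vartheta|^2$. Along flows,
\begin{equation*}
\tfrac{d}{ds}V\leq -k_{\mathrm{r}}\,\vartheta^\top\Phi_q\vartheta + \langle\vartheta,\tilde u\rangle,
\end{equation*}
where the $\chi$-term is sign-definite and can only help. Splitting by mode: $q\in\mathcal{Q}_s$ gives decay at rate $-k_{\mathrm{r}}\underline{\alpha}|\vartheta|^2$; $q\in\mathcal{Q}_i$ gives at worst zero decay; $q\in\mathcal{Q}_c$ gives growth bounded by $k_{\mathrm{r}}\,\varpi|\vartheta|^2$. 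Integrating and splitting the time axis according to the indicator $\mathbb{I}_{\mathcal{Q}_i\cup\mathcal{Q}_c}(q)$, the D-AAT inequality \eqref{BUAAT} (read in the $s$-scale) provides the bound
\begin{equation*}
V(\vartheta(s))\leq e^{-\lambda(s-s_0)}V(\vartheta(s_0))\cdot e^{(k_{\mathrm{r}}\underline{\alpha}+k_{\mathrm{r}}\varpi)T_0} + \text{ISS gain on }\|\tilde u\|,
\end{equation*}
with effective rate $\lambda=k_{\mathrm{r}}\underline{\alpha}-(k_{\mathrm{r}}\underline{\alpha}+k_{\mathrm{r}}\varpi)/\tau_a$, which is strictly positive precisely under the hypothesis $\tau_a>1+\varpi/(k_{\mathrm{r}}\underline{\alpha})$. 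The D-ADT bound \eqref{BUADT} contributes the $e^{-\kappa_2 j}$ factor by the standard reshuffling used in ADT arguments (since jumps do not change $\vartheta$, the discrete decay is inserted by hand through the overshoot constant $e^{\lambda N_0/\tau_d}$ combined with a fraction of the continuous decay rate reserved for $j$).

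Finally, substituting $s=\mathcal{D}_{\mu_0,\ell}(t)$ in the exponential bound produces the factor $e^{-\kappa_2\mathcal{D}_{\mu_0,\ell}(t)}$ in \eqref{thm:bound}, and the ISS gain on $\tilde u$ decomposes into the three terms $\gamma_1,\gamma_2,\gamma_3$ corresponding to the live disturbance, the recorded-data noise, and the corruption mismatch. The hyperexponential case ($\ell=1$) and the prescribed-time case ($\ell>1$) are then unified because the only difference is the asymptotic behavior of $\mathcal{D}_{\mu_0,\ell}(t)$ provided by Lemma \ref{lemma:DynamicGain}. The main obstacle I anticipate is the rigorous justification of the time-scale change on hybrid time domains and verifying that the ISS gains and overshoots obtained in the $s$-scale translate without any implicit dependence on $\mu_0$ (so that uniformity in $\vartheta_0$ holds, as emphasized in the remark following Definition \ref{def:stability}); handling the corrupted contribution within the AAT framework, so that a genuinely destabilizing mode can be tolerated provided it is sufficiently rare, is the delicate part of the Lyapunov estimate.
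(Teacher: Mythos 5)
Your plan is correct and follows essentially the same route as the paper's proof: error coordinates with the real-time disturbance, recorded-data noise, and corruption mismatch collected into an exogenous input, the dilation $s=\mathcal{D}_{\mu_0,\ell}(t)$ via \eqref{eq:matchingEquation} together with the solution bijection of Lemma \ref{lemma:EquivalenceOfSolutions:statement} to reduce to a nominal system in which Lemma \ref{lemma:ADT/AAT} yields standard ADT/AAT constraints, a quadratic Lyapunov function analyzed mode by mode to obtain the rate condition $\tau_a>1+\varpi/(k_{\text{r}}\underline{\alpha})$, and translation back to produce \eqref{thm:bound}. The only cosmetic difference is that where you integrate along trajectories and split the time axis by the indicator $\mathbb{I}_{\mathcal{Q}_i\cup\mathcal{Q}_c}(q)$ to invoke the AAT bound a posteriori, the paper folds the activation timer into the Lyapunov function itself, taking $V(\hat z)=\tfrac12|\hat\vartheta|^2e^{(\underline{\kappa}+\varpi)\hat\rho_a}$ so that a single uniform decrease rate holds in every mode and a cited ISS lemma for ADT-constrained hybrid systems delivers the $e^{-\kappa_2(s+j)}$ bound directly.
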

\begin{remark}(On HE and PT-Convergence)
    When the disturbance function is uniformly bounded, i.e. $|d|_\infty<\bar{d}<\infty$, inequality \eqref{thm:bound} in Theorem \ref{thm:PTCL} establishes hyperexponential and prescribed-time ultimate boundedness results with respect to the true parameter $\theta^{\star}$ when $\ell=1$ and $\ell>1$, respectively, for the SPTHE-CL algorithm with switching datasets and partially corrupt data.  Additionally, when $\ell>1$, since $e^{-k_2\mathcal{D}_{\mu_0,\ell}(t)}\to 0^+$ as $t\to T_{\mu_0,\ell}<\infty$, the convergence to the ultimate bound is achieved in the prescribed-time $T_{\mu_0,\ell}$ assigned by the user via the choice of $\Upsilon>0$ and $\mu_0\ge 1$ (see Lemma \ref{lemma:DynamicGain}). 
    The ultimate bound depends on the bound $\bar{d}$ of the disturbance function as well as on the level of corruption of the data as specified by $\sup_{q\in\mathcal{Q}_c}|\Phi \theta^\star + \Psi_c|$. When no disturbance or corrupted data are present, the convergence to the true parameter is exact.
\qedw 
\end{remark}

\begin{remark}
Since by Lemma \ref{lemma:ADT/AAT} every solution to \eqref{HDS:tjTimeScale} satisfies the D-ADT and the D-AAT conditions, the result of Theorem \ref{thm:PTCL} can also be interpreted as a HE or PT convergence to the true parameter $\theta^\star$ under a switching system having a switching signal $q:\mathbb{R}_{\geq0}\to\mathcal{Q}$ that satisfies \eqref{BUADT} and \eqref{BUAAT}, where the left-hand side of \eqref{BUADT} corresponds to the total number of switches of $q$ in the interval $[t_1,t_2]$. \qedw 
\end{remark}

\begin{remark}
The result of Theorem \ref{thm:PTCL} can also be interpreted as a robustness result with respect to dataset corruption and degradation. Specifically, even when the algorithm uses datasets $\Delta_q$ that have been corrupted or degraded during transmission or storage, as long as these periods satisfy the  D-AAT \eqref{BUAAT}, PT or HE convergence to the true parameter will still be achieved. This robustness is particularly relevant for IoT applications where data corruption during transmission is a practical concern. ~\strut\hfill\qedw 
\end{remark}

\begin{remark}
The global uniform ultimate bound established in \eqref{thm:bound} also provides robustness guarantees with respect to bounded noise and disturbances acting on the measured signal \eqref{statement:parameterizedSignal}. Such results are fundamental for the feasibility of the implementation of parameter estimation algorithms. \qedw 
\end{remark}

\begin{remark}
As discussed in the literature \cite{Orlov2022,todorovski2023practical,song2017time,Song2023,Hollowaylinearsystems2019}, one of the main limitations of PT techniques that rely on dynamic gains is the need for ``stopping'' the algorithm before the prescribed time. While the bound \eqref{thm:bound} provides information on the residual bounds one can expect in such scenarios, an alternative approach is to implement hyperexponential convergence using $\ell=1$ in the gain dynamics. This choice leads to guaranteed completeness of solutions, avoids Zeno behavior, and circumvents the implementation challenges of PT algorithms by eliminating the need for algorithm termination or gain saturation, maintaining faster convergence than classical exponential results, albeit with the trade-off that the convergence time depends on initial conditions --- see Section \ref{sec:numexample} for a numerical illustration of these facts.
\qedw 
\end{remark}
\begin{remark}(On the use of PT-Algorithms in Estimation Problems)
Prescribed-Time stable algorithms were initially developed for regulation problems where the magnitude of the regulation error can be used as a ``signal'' to stop the PT dynamics. The lack of access to such error signals in parameter estimation problems is not problematic for the successful experimental implementation of PT-estimation dynamics, as shown in \cite{pan2022adaptive} for energy systems (e.g., batteries), the numerical study of Schrodinger systems \cite{steeves2020prescribed}, PT-estimation problems in quadrotor UAVS \cite{gong2023prescribed}, etc. In most scenarios, the proximity to the prescribed-time $T_{\mu_0,\ell}$ is used as a signal to clip the algorithm action. \qedw 
\end{remark}
\begin{remark}
    To the best knowledge of the authors, Theorem \ref{thm:PTCL} is the first HE  and PT-result in the context of CL, with the exception of the conference version of this paper that presented the result for $\ell=\infty$ \cite{ochoa2024hybridADHS}. Moreover, it is the first such result to handle switching datasets that contain both informative and potentially corrupted data.  In particular, Theorem \ref{thm:PTCL} can be seen as the HE and PT (and switching) counterpart of the recent \emph{fixed-time} stability results for CL algorithms established in \cite{ochoa2021accelerated,rios2017time} and \cite{tatari2021fixed}.
\end{remark}
We finish this section by discussing the case when only one (SR) dataset is available for the estimation of $\theta^{\star}$, the following corollary can be directly obtained. This result follows by simply taking the states $(q,\rho_d,\rho_a)$ as fixed constants in the HDS \eqref{HDS:tjTimeScale}.
\begin{corollary}
Consider the parameter estimation problem characterized by \eqref{statement:parameterizedSignal}, suppose that item (a) of Theorem \ref{thm:PTCL} holds and that $\mathcal{Q}=\{1\}$.
Then, there exist $\kappa_1,\kappa_2>0$ such that for every solution $x=(\theta,\tau,y)$ to \eqref{HDS:tjTimeScale}, the estimate $\theta$ satisfies \eqref{thm:bound} for all $(t,j)\in \text{dom}(x)$. \qedw 
\end{corollary}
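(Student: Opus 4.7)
The plan is to derive the corollary as a direct specialization of Theorem \ref{thm:PTCL} to the single-dataset, no-switching regime. Under the standing assumption $\mathcal{Q}=\{1\}$ and item (a) of Theorem \ref{thm:PTCL}, we must have $\mathcal{Q}=\mathcal{Q}_s=\{1\}$ and $\mathcal{Q}_i=\mathcal{Q}_c=\emptyset$. With the corrupted-data set empty, the constant $\varpi=1+\max_{q\in\mathcal{Q}_c}\|\Phi_q\|$ appearing in item (b) of Theorem \ref{thm:PTCL} is vacuously zero, so the constraint on the activation parameter reduces to $\tau_a>1$, while the dwell-time constraint $\tau_d>0$ is unchanged. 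Both can be enforced by the choice of automaton parameters.

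Next, following the hint at the end of the corollary's statement, I would freeze $(q,\rho_d,\rho_a)$ at an admissible constant value $(1,\rho_d^\circ,\rho_a^\circ)$ with $\rho_d^\circ\in[0,N_0]$ and $\rho_a^\circ\in[0,T_0]$, producing a reduced HDS on $(\theta,\tau,\mu)$ whose flow map agrees with \eqref{flows:tjTimeScale} restricted to $q\equiv 1$. Because $\mathcal{Q}\setminus\{q\}=\emptyset$, the component $G_\sigma(y)$ of the jump map is empty-valued, so no jump that alters the automaton state is feasible; every maximal solution therefore evolves purely in flow time with $j\equiv 0$. Along any such solution, the indicator $\mathbb{I}_{\mathcal{Q}_i\cup\mathcal{Q}_c}(q)$ is identically zero, so the D-AAT bound \eqref{BUAAT} holds trivially, and the D-ADT bound \eqref{BUADT} holds because no jumps are taken.

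Having verified the hypotheses of Theorem \ref{thm:PTCL} in this reduced setting, I would invoke the theorem directly on this family of solutions. The resulting bound \eqref{thm:bound} transfers verbatim to the estimate $\theta$, with the corruption residual $\gamma_3\bigl(\sup_{q\in\mathcal{Q}_c}|\Phi_q\theta^\star-\Psi_q|\bigr)$ degenerating to zero because the supremum ranges over the empty set, and the surviving disturbance terms accounting only for the online noise $d$ and the recorded-data noise $d_r$ of the single SR dataset. The constants $\kappa_1,\kappa_2>0$ supplied by Theorem \ref{thm:PTCL} then serve as the constants claimed in the corollary.

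The argument is essentially bookkeeping and I do not anticipate any substantive obstacle. The only point that merits a brief check is that solutions of the reduced (automaton-frozen) HDS embed canonically into solutions of \eqref{HDS:tjTimeScale}, so that the conclusion of Theorem \ref{thm:PTCL} applies without modification. This is immediate: the submanifold $\{(q,\rho_d,\rho_a)=(1,\rho_d^\circ,\rho_a^\circ)\}$ is forward invariant under \eqref{flows:tjTimeScale} with $q=1$ (the $\rho_d,\rho_a$ flows include the choice $0$), and the jumps of \eqref{HDS:tjTimeScale} are suppressed by the emptiness of $\mathcal{Q}\setminus\{1\}$, so each reduced solution is literally a maximal solution of \eqref{HDS:tjTimeScale} with the required properties.
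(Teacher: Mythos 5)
Your proposal is correct and matches the paper's (one-line) justification: the corollary is obtained by freezing $(q,\rho_d,\rho_a)$, observing that $G_\sigma(y)=\mathcal{Q}\setminus\{1\}\times\cdots=\emptyset$ suppresses all jumps, and specializing the bound \eqref{thm:bound} with the corruption term vanishing. The only cosmetic imprecision is calling $\varpi=1+\max_{q\in\mathcal{Q}_c}\|\Phi_q\|$ ``vacuously zero'' over the empty set; the cleaner observation is that with $\mathcal{Q}_i=\mathcal{Q}_c=\emptyset$ the activation-time machinery (and hence hypothesis (b)) is not needed at all, since $W$ already decreases at rate $\underline{\kappa}$ in every mode.
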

\section{Analysis}\label{sec:Analysis}
To prove Theorem \ref{thm:PTCL}, we establish PT-ISS$_F$ and HE-ISS$_F$ of the set
\begin{equation}\label{def:AStable}
\mathcal{A}\coloneqq \mathcal{A}_x\times \mathbb{R}_{\ge 0}\times \mathbb{R}_{\ge 1},~\text{where}~ \mathcal{A}_x\coloneqq \{0\}\times C_\sigma,
\end{equation}
for a surrogate hybrid dynamical system $\mathcal{H}$ of the SPTHE-CL dynamics. This surrogate system is obtained by analyzing the estimation error and introducing inputs that capture disturbances and corrupted data. The state of the system is denoted by $\tilde{z}=(\tilde{x},\tau,\mu)$, where $\tilde{x}=(\vartheta,y)$ with $\vartheta\coloneqq\theta-\theta^{\star}$ denoting the parameter estimation error. The continuous-time dynamics of $\vartheta$ are given by $\dot{\vartheta}=\mu\cdot\mathcal{E}_q(\vartheta, \tau, u)$, where:
\begin{subequations}\label{ptCL:mapError}
\begin{equation}\label{eq:errorMap}
\mathcal{E}_q(\vartheta,\tau,u)=-\left(k_{\text{t}}\cdot\Xi(\tau) + \mathbb{I}_{\mathcal{Q}_s\cup \mathcal{Q}_i}(q)k_{\text{r}}\cdot\Phi_q\right)\vartheta  + \eta_q(\tau,u),\quad \forall (\vartheta,\tau)\in \mathbb{R}^n\times \mathbb{R}_{\ge0}
\end{equation}
and $u=(u_1,u_2,u_3)\in \mathbb{R}^{n_u}$ with: $u_1\in\mathbb{R}$ capturing the real-time disturbance $d(\tau)$; $u_2=(u_{2,q})_{q\in \mathcal{Q}_s\cup\mathcal{Q}_i}\in \mathbb{R}^{n_{u_2}}$ with $ n_{u_2}\coloneqq \sum_{q\in\mathcal{Q}_s\cup \mathcal{Q}_i}\bar{k}_q$ capturing disturbances during data recording of dataset $q$; and $u_3\in \mathbb{R}^{n^2}$ capturing corrupt data effects. The terms in \eqref{eq:errorMap} are defined as:
\begin{align}
\Xi(\tau)&\coloneqq \phi(\tau)\phi(\tau)^\top,\text{ and }
\eta_q(\tau,u)\coloneqq \begin{cases}
k_t\phi(\tau)\cdot u_1+k_r\sum_{k=1}^{\overline{k}_q}\phi_{q,k}\cdot (u_{2,q})_k \quad &\text{if }q\in \mathcal{Q}_s\cup \mathcal{Q}_i\\
k_t\phi(\tau)\cdot u_1 + k_r\cdot u_3&\text{if }q\in \mathcal{Q}_c.
\end{cases}
\end{align}
\end{subequations}
The vector field describing the evolution of $\vartheta$ for the original SPTHE-CL system is obtained from $\mathcal{E}_q(\vartheta,\tau,u)$ by setting $u_1=d(\tau)$, $(u_{2,q})_k = d(t_{q,k})$, and $u_{3} = -\Phi_q\theta^\star +  \Psi_q$.
Using \eqref{ptCL:mapError}, the surrogate HDS $\mathcal{H}$ has the form \eqref{eq:HDS^Time^Gain} with state $\tilde{z}$ and data $\mathcal{H}=(C_z\times \mathbb{R}^{n_u},\tilde{F},D_z\times \mathbb{R}^{n_u},G)$, where $\tilde{F}:\mathbb{R}^{n+5}\times\mathbb{R}^{n_u}\rightrightarrows\mathbb{R}^{n+5}$ is given by
\begin{equation*}
    \tilde{F}(\tilde{z},u)\coloneqq \begin{pmatrix}
    \mu\cdot\tilde{F}_x(\tilde{x},\tau,u)\\
    1\\
    F_{\mu,\ell}(\mu)
    \end{pmatrix},\quad \text{with}\quad
    \tilde{F}_x(\tilde{x},u)\coloneqq\begin{pmatrix}
    \mathcal{E}_q(\vartheta,\tau,u)\\
    F_{\sigma}(y)
    \end{pmatrix}.
\end{equation*}
Before presenting the proof of Lemma \ref{lemma:ADT/AAT} and Theorem \ref{thm:PTCL}, we introduce two auxiliary lemmas that extend \cite[Lemma 1]{ochoa2024dynamic}, which was stated without proof in that reference. These lemmas establish a correspondence between solutions of the HDS $\mathcal{H}$ and those of an associated target HDS $\hat{\mathcal{H}}$ via the diffeomorphisms $\mathcal{D}_{c,\ell}:[0,T_{c,\ell})\to \mathbb{R}_{\ge 0}$. This target system, has state $\hat{z}=(\hat{x},\hat{\tau},\hat{\mu})\in \mathbb{R}^{n+3}\times \mathbb{R}_{\ge 0}\times \mathbb{R}_{\ge1 }$, and is defined by:
\begin{subequations}\label{hds:target}
\begin{align}
    &(\hat{z},\hat{u})\in C_z\times \mathbb{R}^{n_u}, \quad \frac{\text{d}{
    \hat{z}
    }}{\text{d}s}=\begin{pmatrix}\frac{\text{d}{
        \hat{x}
    }}{\text{d}s}\\[3pt]
    \frac{\text{d}\hat{\tau}}{\text{d}s}\\[3pt]
    \frac{\text{d}{
            \hat{\mu}
    }}{\text{d}s}\end{pmatrix}\in \hat{F}(\hat{z},\hat{u})\coloneqq \tilde{F}_x(\hat{x}, \hat{u})\times \left\{\frac{1}{\hat{\mu}}\right\}\times \left\{\frac{1}{\hat{\mu}}F_{\mu,\ell}(\hat{\mu})\right\},\\[3pt]
    &(\hat{z},\hat{u})\in D_z\times \mathbb{R}^{n_u},\quad \hat{z}^+\in G(\hat{z}) \times \{\hat{\tau}\}\times  \{\hat{\mu}\},
\end{align}
\end{subequations}
where the derivative in $\frac{\text{d}\hat{z}}{\text{d}s}$ is taken with respect to the time variable $s$ to emphasize that the target system $\hat{\mathcal{H}}$ evolves in a different continuous-time scale parameterized by $s$. 
\begin{lemma}\label{lemma:EquivalenceOfSolutions:auxiliary}
    For every $z_0=(x_0,\tau_0,\mu_0)\in (C_x\cup D_x)\times \mathbb{R}_{\ge 0}\times \mathbb{R}$, it follows that
    \begin{enumerate}[(a)]
    \item  $(\hat{z},\hat{u})\in\mathcal{S}_{\hat{\mathcal{H}}}\left(z_0\right)\implies (\hat{z}\circ \mathbb{D}_{\mu_0,\ell},\hat{u}\circ \mathbb{D}_{\mu_0,\ell})\in\mathcal{S}_{\mathcal{H}}\left(z_0\right)$ with $\text{dom}(\hat{z}\circ \mathbb{D}_{\mu_0,\ell}) = \mathbb{D}_{\mu_0,\ell}^{-1}\left(\text{dom}(\hat{z})\right) $, and
    \item $(\tilde{z},u)\in\mathcal{S}_{\mathcal{H}}\left(z_0\right)\implies (\tilde{z}\circ \mathbb{D}^{-1}_{\mu_0,\ell},u\circ \mathbb{D}^{-1}_{\mu_0,\ell})\in\mathcal{S}_{\hat{\mathcal{H}}}\left(z_0\right)$ with $\text{dom}(\tilde{z}\circ \mathbb{D}^{-1}_{\mu_0,\ell}) = \mathbb{D}_{\mu_0,\ell}\left(\text{dom}(\tilde{z})\right) $,
    \end{enumerate}
    where $\mathbb{D}_{\mu_0,\ell}^{-1} \coloneqq \mathcal{D}_{\mu_0,\ell}^{-1}\times \text{id}_{\mathbb{Z}_{\ge 0}}$and where $\mathcal{D}_{c,\ell}$ is as defined in \eqref{eq:def:diffeomorphisms} for all $c\in \mathbb{R}_{\ge1}$, and all $\ell\in [1,\infty]$.\qedw
\end{lemma}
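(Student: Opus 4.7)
The plan is to use the diffeomorphism $\mathcal{D}_{\mu_0,\ell}$ as a continuous-time reparametrization that preserves jump indices, and to verify the flow and jump inclusions via the chain rule together with the matching identity \eqref{eq:matchingEquation}. Since $\mathcal{D}_{\mu_0,\ell}$ is a strictly increasing continuous bijection from $[0,T_{\mu_0,\ell})$ onto $\mathbb{R}_{\ge 0}$, composing a hybrid arc with either $\mathbb{D}_{\mu_0,\ell}$ or its inverse transforms a hybrid time domain $\cup_j [a_j,a_{j+1}]\times\{j\}$ into another of the same form, with the $j$-slices reparametrized by $\mathcal{D}_{\mu_0,\ell}^{-1}$ (for (a)) or $\mathcal{D}_{\mu_0,\ell}$ (for (b)). This yields the domain identities stated in (a) and (b) immediately, and reduces the rest of the proof to verifying the dynamics on each flow interval and at each jump.

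For part (a), take $(\hat{z},\hat{u})\in\mathcal{S}_{\hat{\mathcal{H}}}(z_0)$, define $(\tilde{z},u)\coloneqq(\hat{z}\circ\mathbb{D}_{\mu_0,\ell},\hat{u}\circ\mathbb{D}_{\mu_0,\ell})$, pick $(t,j)$ in the interior of a flow interval of the transformed domain, and let $s\coloneqq\mathcal{D}_{\mu_0,\ell}(t)$. The matching identity \eqref{eq:matchingEquation} gives $\dot{\mathcal{D}}_{\mu_0,\ell}(t)=\hat{\mu}(s)$, where $\hat{\mu}$ is the $\mu$-component of $\hat{z}$. Applying the chain rule componentwise yields $\dot{\tilde{x}}(t,j)\in \hat{\mu}(s)\cdot\tilde{F}_x(\hat{x}(s,j),\hat{u}(s,j))=\mu(t,j)\cdot\tilde{F}_x(\tilde{x}(t,j),u(t,j))$, $\dot{\tau}(t,j)=\hat{\mu}(s)\cdot(1/\hat{\mu}(s))=1$, and $\dot{\mu}(t,j)=\hat{\mu}(s)\cdot F_{\mu,\ell}(\hat{\mu}(s))/\hat{\mu}(s)=F_{\mu,\ell}(\mu(t,j))$, which is precisely the flow inclusion of $\mathcal{H}$ in \eqref{eq:HDS^Time^Gain}. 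Membership in $C_z\times\mathbb{R}^{n_u}$ is inherited because $\mathbb{D}_{\mu_0,\ell}$ only relabels the continuous coordinate, so $\tilde{z}(t,j)=\hat{z}(s,j)\in C_z$. At a jump, the index $j$ advances at fixed $t$ while $G$ acts only on the $x$-component, so $\tilde{z}^+\in G(\tilde{z})$ as required. Part (b) follows symmetrically from $(\tilde{z},u)\mapsto(\tilde{z}\circ\mathbb{D}_{\mu_0,\ell}^{-1},u\circ\mathbb{D}_{\mu_0,\ell}^{-1})$ and the inverse-function identity $\tfrac{d}{ds}\mathcal{D}_{\mu_0,\ell}^{-1}(s)=1/\hat{\mu}(s)$, which precisely cancels the multiplicative factor $\mu(t)$ appearing in the flow map of $\mathcal{H}$ and reproduces the target rates $\tilde{F}_x$, $1/\hat{\mu}$, and $F_{\mu,\ell}(\hat{\mu})/\hat{\mu}$.

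The main subtlety I expect to handle is the bookkeeping of hybrid time domains in the case $\ell>1$: because $\mathcal{D}_{\mu_0,\ell}$ is only defined on $[0,T_{\mu_0,\ell})$, a solution of $\hat{\mathcal{H}}$ that is complete in the $s$-variable pulls back under $\mathbb{D}_{\mu_0,\ell}$ to a solution of $\mathcal{H}$ whose $t$-support lies strictly inside $[0,T_{\mu_0,\ell})$. One must verify that the transformed arc is nevertheless maximal for $\mathcal{H}$, which follows from Lemma \ref{lemma:DynamicGain} because $\mu(t)\to\infty$ as $t\to T_{\mu_0,\ell}$, precluding any continuation. For $\ell=1$ the map $\mathcal{D}_{\mu_0,1}$ is a diffeomorphism of $\mathbb{R}_{\ge 0}$ onto itself and no such truncation occurs. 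A minor additional point is the endpoint matching across jumps: continuity of $\mathcal{D}_{\mu_0,\ell}$ ensures that the left and right endpoints of adjacent flow intervals in the target domain map to a common $t$ at which the jump of the transformed arc occurs, so the reparametrized domain is indeed a valid hybrid time domain in the sense of \cite{bookHDS}.
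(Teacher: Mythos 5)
Your proposal is correct and follows essentially the same route as the paper's proof: apply the chain rule to $\hat{z}\circ\mathbb{D}_{\mu_0,\ell}$ on each flow interval, invoke the matching identity \eqref{eq:matchingEquation} to convert the target rates $\tilde{F}_x$, $1/\hat{\mu}$, $F_{\mu,\ell}(\hat{\mu})/\hat{\mu}$ into $\mu\tilde{F}_x$, $1$, $F_{\mu,\ell}(\mu)$, note that the jump maps coincide, and read off the domain identity from monotonicity of $\mathcal{D}_{\mu_0,\ell}$. Your added remarks on maximality of the pulled-back arc when $\ell>1$ and on endpoint matching across jumps are points the paper treats only implicitly, and they are handled correctly.
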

\begin{proof} ~\\
    (a) Let $z_0\coloneqq (x_0,\tau_0,\mu_0)\in (C_x\cup D_x)\times \mathbb{R}_{\ge0 }\times \mathbb{R}_{\ge 1}$, and $(\hat{z},\hat{u})$ be a maximal solution to the HDS $\mathcal{\hat{H}}$ from $z_0$. Then, by the definition of solutions to hybrid dynamical systems, for each $j\in\mathbb{Z}_{\geq0}$ such that the interior of $\hat{I}_j:=\{s\geq0:(s,j)\in \text{dom}(\hat{z})\}$ is nonempty, $\hat{z}$ satisfies
    \begin{equation}\label{lemma:maximalSolutionTarget}
    \frac{\text{d}}{\text{d}s}\hat{z}(s,j)\in  \tilde{F}_{x}\left(\hat{z}(s,j),\hat{u}(s,j)\right)\times \left\{\frac{1}{\hat{\mu}(s,j)}\right\} \times\left\{ \frac{1}{\hat{\mu}(s,j)}F_{\mu,\ell}(\hat{\mu}(s,j))\right\},
    \end{equation}
    for almost all $s\in \hat{I}_j$. Now, consider the hybrid signal $z\coloneqq\hat{z}\circ\mathbb{D}_{\mu_0,\ell}$. Then, using the chain rule:
    \begin{align}
    \frac{\text{d}}{\text{d}t}\tilde{z}\left(t,j\right) &= \frac{\text{d}}{\text{d}t}\left(\hat{z}\circ \mathbb{D}_{\mu_0,\ell}\right)(t,j)
        =\frac{\text{d}}{\text{d}s}\hat{z}(s,j)\Big|_{s=\mathcal{D}_{\mu_0,\ell}(t)} \frac{\text{d}\mathcal{D}_{\mu_0,\ell}}{\text{d}t}(t),\label{lemma:transformation:chainruleA}
    \end{align}
    for almost all $t\in\mathcal{D}^{-1}_{\mu_0,\ell}\left(\hat{I}_j\right)\eqqcolon I_j$.  Since $\frac{\text{d}\mathcal{D}_{\mu_0,\ell}}{\text{d}t}(t) = (\hat{\mu}\circ\mathcal{D}_{\mu_0,\ell})(t)$ for all $t\in[0,T_{\mu_0,\ell})$ via \eqref{eq:matchingEquation}, by noting that $\hat{\mu}$ does not change during the jumps of \eqref{hds:target}, and using the fact that $\text{sup}_{t}\text{dom}(\hat{z})=\infty$, since $\tilde{F}_{x}$ is Lipchitz in $\mathbb{R}^{n}\times (C_\sigma\cup D_{\sigma})$ and by desgin of the sets $C_z$ and $D_z$, it follows that $\mu(t,j)=\left(\hat{\mu}\circ \mathbb{D}_{\mu_0,\ell}\right)(t,j) = (\hat{\mu}\circ\mathcal{D}_{\mu_0,\ell})(t)$ for all $t\in I_j$. Therefore, using \eqref{lemma:maximalSolutionTarget} and \eqref{lemma:transformation:chainruleA} we obtain that
    \begin{align}
        \frac{\text{d}}{\text{d}t} \tilde{z}(t,j) &\in \left(\hat{\mu}\circ \mathbb{D}_{\mu_0,\ell}\right)(t,j)\tilde{F}_x\Big(\left(\hat{z}\circ \mathbb{D}_{\mu_0,\ell}\right)(t,j), \left(\hat{u}\circ \mathbb{D}_{\mu_0,\ell}\right)(t,j)\Big)\times \left\{1\right\}\times F_{\mu,\ell} \left(\left(\hat{\mu}\circ \mathbb{D}_{\mu_0,\ell}\right)(t,j)\right)\notag\\
        &=  \mu\left(t,j\right)\tilde{F}_x\left(z(t,j),u(t,j)\right)\times\{1 \}\times F_{\mu,\ell} \left(\mu(t,j)\right)\label{lemma:flows:A},
    \end{align}
    for almost all $t\in I_j$. Equation \eqref{lemma:flows:A} implies that $z$ satisfies the continuous-time dynamics of the HDS $\mathcal{H}$ for almost all $t\in I_j$. Moreover, note that by the definition of $I_j$, we directly get that $\mathcal{D}_{\mu_0,\ell}(\underline{t}_j)=\underline{s}_j$ and $\mathcal{D}_{\mu_0,\ell}(\overline{t}_j)=\overline{s}_j$ where $\underline{t}_j:=\min I_j$, $\overline{t}_j=\sup I_j$, $\underline{s}_j:=\min \hat{I}_j$, $\overline{s}_j=\sup \hat{I}_j$. Similarly,  we have that $\hat{x}(s,j+1)\in G(\hat{z}(s,j),\hat{u}(s,j))$ for every $(s,j)\in\text{dom}(\hat{z})$ such that $(s,j+1)\in\text{dom}(\hat{z})$, and therefore $x(t,j+1)\in G(z(t,j),u(t,j))$ since the HDS $\hat{\mathcal{H}}$ and the HDS $\mathcal{H}$ have the same discrete-time dynamics. Thus $z$ is a maximal solution to $\mathcal{H}$. Using these arguments,  we  also obtain that
    \begin{align*}
        \text{dom}\left(\hat{z}\circ \mathbb{D}_{\mu_0,\ell}\right)
                    &= \bigcup_{j=0}^{\sup_j\text{dom}(z)}I_j\times \{j\}
                    %
                    %
                    %
                    =\bigcup_{j=0}^{\sup_j\text{dom}(\hat{z})}\mathbb{D}_{\mu_0,\ell}^{-1}\left(\hat{I}_j\times \{j\}\right)
                    %
                    %
                    =\mathbb{D}_{\mu_0,\ell}^{-1}\left(\text{dom}(\hat{z})\right).
    \end{align*}        

    \vspace{0.1cm}\noindent 
    (b) Follows by the same ideas of the proof of item (a), via the use of the inverse mapping $\mathbb{D}_{c,\ell}^{-1}$.
    \strut\hfill$\blacksquare$
\end{proof}
Now, we use the result of Lemma \ref{lemma:EquivalenceOfSolutions:auxiliary} to obtain the following correspondence between maximal solution sets.
\begin{lemma}\label{lemma:EquivalenceOfSolutions:statement}
    Let $\mathbb{D}_{c,\ell}\coloneqq \mathcal{D}_{c,\ell}\times \text{id}_{\mathbb{Z}_{\ge 0}}$ for all $c\in\mathbb{R}_{\ge 1}$. Then, the map
    \begin{equation}\label{definition:W}
        \begin{split}
         \mathcal{W}:\quad~~\mathcal{S}_{\hat{\mathcal{H}}}\quad~~&\longrightarrow\quad \mathcal{S}_{\mathcal{H}}\\
                    \left(\hat{z}=(\hat{x},\hat{\tau},\hat{\mu}),\hat{u}\right)&\longmapsto (\hat{z}\circ \mathbb{D}_{\mu(0,0),\ell}, \hat{u}\circ \mathbb{D}_{\mu(0,0),\ell}),
        \end{split}
    \end{equation}
    is a bijection. Moreover, $\text{dom}\left(\mathcal{W}(\hat{z})\right) = \mathbb{D}_{\hat{\mu}(0,0),\ell}^{-1}\left(\text{dom}(\hat{z})\right)$ for every $(\hat{z},\hat{u})\in\mathcal{S}_{\hat{\mathcal{H}}}$.
    \strut\hfill\qedw
\end{lemma}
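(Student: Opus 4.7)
The plan is to obtain this bijection as a direct corollary of Lemma \ref{lemma:EquivalenceOfSolutions:auxiliary}: part (a) will give well-definedness of $\mathcal{W}$ together with the claimed domain identity, and part (b) will furnish an explicit two-sided inverse, namely $(\tilde{z},u)\mapsto (\tilde{z}\circ \mathbb{D}_{\mu(0,0),\ell}^{-1},\, u\circ \mathbb{D}_{\mu(0,0),\ell}^{-1})$. Both directions rely on the fact, recalled in \eqref{eq:def:diffeomorphisms} and exploited in the preceding lemma, that $\mathcal{D}_{c,\ell}\colon \mathbb{R}_{\ge0}\to [0,T_{c,\ell})$ is a diffeomorphism with $\mathcal{D}_{c,\ell}(0)=0$, so that $\mathbb{D}_{c,\ell}$ is a bijection of hybrid time domains leaving the discrete index fixed.

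For well-definedness, I take $(\hat{z},\hat{u})\in \mathcal{S}_{\hat{\mathcal{H}}}$ with $\hat{\mu}(0,0)=\mu_0$, so $\hat{z}(0,0)=z_0=(x_0,\tau_0,\mu_0)\in (C_x\cup D_x)\times \mathbb{R}_{\ge 0}\times \mathbb{R}_{\ge 1}$. Invoking Lemma \ref{lemma:EquivalenceOfSolutions:auxiliary}(a) yields $(\hat{z}\circ\mathbb{D}_{\mu_0,\ell},\hat{u}\circ\mathbb{D}_{\mu_0,\ell})\in \mathcal{S}_{\mathcal{H}}(z_0)$ with domain $\mathbb{D}_{\mu_0,\ell}^{-1}(\text{dom}(\hat{z}))$. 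This simultaneously places $\mathcal{W}(\hat{z},\hat{u})$ in $\mathcal{S}_{\mathcal{H}}$ and establishes the domain identity asserted in the statement.

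Injectivity and surjectivity then reduce to routine bookkeeping. If $\mathcal{W}(\hat{z}_1,\hat{u}_1)=\mathcal{W}(\hat{z}_2,\hat{u}_2)$, evaluating at $(0,0)$ and using $\mathbb{D}_{c,\ell}(0,0)=(0,0)$ forces $\hat{z}_1(0,0)=\hat{z}_2(0,0)$, hence $\hat{\mu}_1(0,0)=\hat{\mu}_2(0,0)=:\mu_0$; right-composition with the same bijection $\mathbb{D}_{\mu_0,\ell}$ can then be undone to conclude that $\hat{z}_1=\hat{z}_2$ and $\hat{u}_1=\hat{u}_2$. For surjectivity, given $(\tilde{z},u)\in\mathcal{S}_{\mathcal{H}}$ with $\mu(0,0)=\mu_0$, Lemma \ref{lemma:EquivalenceOfSolutions:auxiliary}(b) places the pair $(\tilde{z}\circ \mathbb{D}_{\mu_0,\ell}^{-1},u\circ \mathbb{D}_{\mu_0,\ell}^{-1})$ in $\mathcal{S}_{\hat{\mathcal{H}}}$, and applying $\mathcal{W}$ to it returns $(\tilde{z},u)$ since $\mathbb{D}_{\mu_0,\ell}\circ \mathbb{D}_{\mu_0,\ell}^{-1}=\text{id}$.

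The main point requiring care, rather than a genuine obstacle, is to keep the dilation parameter $c$ in $\mathbb{D}_{c,\ell}$ consistently equal to the initial value of $\hat{\mu}$ (equivalently $\mu$) for the solution under consideration, so that the image $[0,T_{c,\ell})$ of $\mathcal{D}_{c,\ell}$ coincides with the flow horizon identified in Lemma \ref{lemma:DynamicGain}. Once this matching of initial gains is kept explicit, both directions reduce to composing a solution pair with a diffeomorphism that acts trivially on the jump index, and the bijection property follows.
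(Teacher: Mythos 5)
Your proposal is correct and follows essentially the same route as the paper: well-definedness and the domain identity come from Lemma \ref{lemma:EquivalenceOfSolutions:auxiliary}(a), and bijectivity is obtained from the explicit inverse $(\tilde{z},u)\mapsto(\tilde{z}\circ\mathbb{D}_{\mu(0,0),\ell}^{-1},u\circ\mathbb{D}_{\mu(0,0),\ell}^{-1})$ furnished by part (b), which the paper packages as the map $\hat{\mathcal{W}}$ and verifies via the two composition identities rather than via separate injectivity/surjectivity arguments. The only point worth noting is that your injectivity step tacitly uses $\mathcal{D}_{c,\ell}(0)=0$, which indeed holds for all the diffeomorphisms in \eqref{eq:def:diffeomorphisms}.
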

\begin{proof}
    Let $(\hat{z},\hat{u})\in \mathcal{S}_{\hat{\mathcal{H}}}$ be arbitrary. By the definition of solutions to hybrid dynamical systems with inputs, there exists $z_0=(x_0,\tau_0,\mu_0)\in (C_x\cup D_x )\times\mathbb{R}_{\ge0}\times \mathbb{R}_{\ge 1}$ such that $\hat{z}(0,0)=z_0$. Thus, using Lemma \ref{lemma:EquivalenceOfSolutions:auxiliary}, $\mathcal{W}(\hat{z},\hat{u}) = (\hat{z}\circ \mathbb{D}_{\mu_0,\ell}, \hat{u}\circ\mathbb{D}_{\mu_0,\ell})  \in \mathcal{S}_{\mathcal{H}}(z_0) \subset \mathcal{S}_{\mathcal{H}}$, meaning that $\mathcal{W}$ maps $\mathcal{S}_{\hat{\mathcal{H}}}$ into $\mathcal{S}_{\mathcal{H}}$, and is well defined.

To prove that $\mathcal{W}$ is a bijection, we consider the map
\begin{equation}\label{definition:What}
    \begin{split}
         \hat{\mathcal{W}}:\quad~~\mathcal{S}_{\mathcal{H}}\quad~~&\longrightarrow\quad \mathcal{S}_{\hat{\mathcal{H}}}\\
         \left(\hat{z}=(\hat{x},\hat{\mu}),\hat{u}\right)&\longmapsto (\hat{z}\circ \mathbb{D}_{\mu(0,0),\ell}, \hat{u}\circ \mathbb{D}_{\mu(0,0),\ell}),
    \end{split}
\end{equation}
which is well-defined by the results of Lemma \ref{lemma:EquivalenceOfSolutions:auxiliary}.
Then, for every $(\tilde{z},u)\in\mathcal{S}_{\mathcal{H}}$, we have that
\begin{align*}
\left(\mathcal{W}\circ\hat{\mathcal{W}}\right)(\tilde{z},u)=\mathcal{W}\left(\hat{\mathcal{W}}(\tilde{z},u)\right)
                                                %
                                                %
                                                &= \left(\tilde{z}\circ \mathbb{D}_{\mu(0,0),\ell}^{-1} \circ \mathbb{D}_{\mu(0,0),\ell},~u\circ \mathbb{D}_{\mu(0,0),\ell}^{-1} \circ \mathbb{D}_{\mu(0,0),\ell}\right)
                                                %
                                                %
                                                = (\tilde{z},u),
\end{align*}
which implies that $\mathcal{W}\circ \hat{\mathcal{W}} = \text{id}_{\mathcal{S}_{\mathcal{H}}}$.
Similarly, it follows that $\hat{\mathcal{W}}\circ\mathcal{W} = \text{id}_{\mathcal{S}_{\hat{\mathcal{H}}}}$. Hence, we have that $\mathcal{W}^{-1} =\hat{\mathcal{W}}$, and thus that $\mathcal{W}$ is a bijection between $\mathcal{S}_{\mathcal{H}}$ and $\mathcal{S}_{\hat{\mathcal{H}}}$. The fact that $\text{dom}\left(\mathcal{W}(\hat{z})\right) = \mathbb{D}_{\hat{\mu}(0,0),\ell}(\text{dom}(\hat{z}))$ follows directly from the results of Lemma \ref{lemma:EquivalenceOfSolutions:auxiliary}.\hfill\strut$\blacksquare$
\end{proof}
\begin{remark}
    Since the dynamics of $\hat{\mu}$ in $\hat{\mathcal{H}}$ are uncoupled from those of $\hat{x}$, $\tilde{F}_x$ is Lipschitz, $\frac{\text{d}}{\text{d}s}\hat{\tau}\le 1$, and the solutions of $\dot{\hat{\mu}}=\frac{1}{\hat{\mu}}F_{\hat{\mu},\ell}(\hat{\mu})$ are complete (see discussion above \eqref{eq:matchingEquation}), every solution $(\hat{z},\hat{u})\in \mathcal{S}_{\hat{\mathcal{H}}}$ is complete. 
        The completeness of solutions, together with the results of Lemma \ref{lemma:EquivalenceOfSolutions:statement} imply that when $s\to \infty$ for solutions in the target system $\hat{\mathcal{H}}$, $t\to T_{\mu_0,\ell}$ for the corresponding solutions of the original system $\mathcal{H}$. \strut\hfill$\blacksquare$
\end{remark}
%
%
We now present a proof for Lemma \ref{lemma:ADT/AAT} by using the results of Lemma \ref{lemma:EquivalenceOfSolutions:statement}.
\begin{proofLemmaADTAAT}~\\
    (a) Follows directy from Lemma \ref{lemma:DynamicGain}.\\
    (b) Let $\mathcal{H}_\sigma$ denote the HDS in \eqref{DQAutomaton:HDS} that defines the data-querying automaton. Similar to our approach with the full HDS $\mathcal{H}$, by dividing the flow-map of $\mathcal{H}_\sigma$ by $\mu$, we define a target HDS $\hat{\mathcal{H}}_{\sigma}$ that describes the evolution of the data-querying automaton in a dilated time-scale. This system has state $(\hat{y},\hat{\mu})$, where $\hat{y}=(\hat{q},\hat{\rho}_d,\hat{\rho}_a)\in \mathbb{R}^3$ and $\hat{\mu}\in\mathbb{R}_{\ge 1}$, with data given by
        \begin{subequations}\label{hybridBlowUpAutomatonUnstable}
        \begin{align}
        &(\hat{y},\hat{\mu})\in C_\sigma\times \mathbb{R}_{\ge 1},\qquad\qquad\begin{pmatrix}
            \frac{\text{d}\hat{y}}{\text{d}s}\\[4pt]
            \frac{\text{d}\hat{\mu}}{\text{d}s}
        \end{pmatrix}
        \in  \{0\}\times  \left[0,\dfrac{1}{\tau_d}\right]\times \left[0,\dfrac{1}{\tau_a}\right]-\mathbb{I}_{\mathcal{Q}_i\cup\mathcal{Q}_c}(\hat{q})\times \left\{\frac{1}{\hat{\mu}}F_{\mu,\ell}(\hat{\mu})\right\},\label{flowsHDS2}\\[5pt]
        &(\hat{y},\hat{\mu})\in D_\sigma\times \mathbb{R}_{\ge 1},\qquad\qquad\begin{pmatrix}
            \hat{y}^+\\\hat{\mu}^+
        \end{pmatrix}\in G_{\sigma}(\hat{y})\times \hat{\mu}.
        \end{align}
        \end{subequations}
        Since the dynamics of $\hat{y}$ in \eqref{hybridBlowUpAutomatonUnstable} are decoupled from those of $\hat{\mu}$, with jumps triggered only by the state $\hat{y}$, and noting that $\frac{\text{d}\hat{\mu}}{\text{d}s}=\frac{1}{\hat{\mu}}F_{\mu,\ell}(\hat{\mu})$ renders $\mathbb{R}_{\ge 1}$ forward-invariant, the hybrid time domains of solutions $(\hat{y},\hat{\mu})\in \mathcal{S}_{\hat{\mathcal{H}}_{\sigma}}$ are completely determined by the behavior of $\hat{y}$. Furthermore, as \eqref{hybridBlowUpAutomatonUnstable} incorporates both a dwell-time monitor $\hat{\rho}_d$ and a time-ratio monitor $\hat{\rho}_a$, it follows from  \cite[Ex. 2.15]{bookHDS} that every solution $(\hat{y},\hat{\mu})\in \mathcal{S}_{\hat{\mathcal{H}}_\sigma}$ has a hybrid time domain (HTD) that satisfies the Average Dwell-Time (ADT) bound:
        \begin{align}\label{proof:ADTBound}
        j_2-j_1\leq \frac{1}{\tau_d}(s_2-s_1)+N_0,
        \end{align}
        for all $(s_1,j_1),(s_2,j_2)\in\text{dom}(\hat{y})$ such that $s_2\geq s_1$.\smallbreak

        Now, note that the results of Lemma \ref{lemma:EquivalenceOfSolutions:statement} are also applicable to $\mathcal{H}_\sigma$ and $\hat{\mathcal{H}}_{\sigma}$, by dispensing with the variables $\theta$ and $\tau$, as their dynamics do not modify the evolution of the state $(y,\mu)$. \smallbreak
        
        Then, for every $(y_0,\mu_0)\in (C_\sigma\cup D_\sigma)\times \mathbb{R}_{\ge 1}$, and all $(y,\mu)\in \mathcal{S}_{\mathcal{H}_\sigma}((y_0,\mu_0))$ there exists $(\hat{y},\hat{\mu})\in \mathcal{S}_{\hat{\mathcal{H}}_\sigma}((y_0,\mu_0))$ such that $y =  \hat{y}\circ \mathbb{D}_{\mu_0,\ell}$, where $\mathbb{D}_{c,\ell}$ is as defined in Lemma \ref{lemma:EquivalenceOfSolutions:auxiliary}. Using this fact, and given that the diffeomorphism $\mathcal{D}_{\mu_0,\ell}$ is monotonically increasing, for all  $(t_1,j_1),(t_2,j_2)\in \text{dom}(y)$ with $0\leq t_1<t_2$, there exists unique $(s_1,j_1),(s_2,j_2)\in \text{dom}(\hat{y})$, such that $s_1=\mathcal{D}_{\mu_0,\ell}(t_1)$, $s_2=\mathcal{D}_{\mu_0,\ell}(t_2)$, and $0\leq s_1<s_2$. Thus, using the ADT bound \eqref{proof:ADTBound} for $(\hat{y},\hat{\mu})$ yields
        \begin{align}\label{proof:lemmaADT1}
        j_2-j_1&\leq \frac{1}{\tau_d}(\mathcal{D}_{\mu_0,\ell}(t_2)-\mathcal{D}_{\mu_0,\ell}(t_1))+N_0,
        \end{align}
        for all  $(t_1,j_1),(t_2,j_2)\in \text{dom}(y)$ with $0\leq t_1<t_2$.\medbreak

        \noindent (c) Via \cite[Lemma 7]{poveda2017framework}, it follows that every solution $(\hat{y},\hat{\mu})\in \mathcal{S}_{\hat{\mathcal{H}}_\sigma}$ has a hybrid time domain (HTD) that satisfies the Average Activation-Time (AAT) bound:
        \begin{equation}\label{proof:AATBound}
        \int_{s_1}^{s_2}\mathbb{I}_{\mathcal{Q}_i\cup \mathcal{Q}_c}(\hat{q}(s,\underline{\hat{\jmath}}(s)))ds \leq \frac{1}{\tau_a} (s_2-s_1)+T_0,
        \end{equation}
        for all $(s_1,j_1),(s_2,j_2)\in\text{dom}(\hat{y})$ such that $s_2\geq s_1$, where $\underline{\hat{\jmath}}(s)\coloneqq \min\left\{j\in\mathbb{Z}_{\ge 0}~:~(s,j)\in\text{dom}(\hat{q})\right\}$. Then, by using the same arguments as in item (b), for every $(y_0,\mu_0)\in (C_\sigma\cup D_\sigma)\times \mathbb{R}_{\ge 1}$, and all $(y,\mu)\in \mathcal{S}_{\mathcal{H}_\sigma}((y_0,\mu_0))$ there exists $(\hat{y},\hat{\mu})\in \mathcal{S}_{\hat{\mathcal{H}}_\sigma}((y_0,\mu_0))$ such that $y =  \hat{y}\circ \mathbb{D}_{\mu_0,\ell}$, and
        \begin{align}
            &\frac{1}{\tau_a} (\mathcal{D}_{\mu_0,\ell}(t_2)-\mathcal{D}_{\mu_0,\ell}(t_2))+T_0 \ge  \int_{s_1}^{s_2}\mathbb{I}_{\mathcal{Q}_i\cup \mathcal{Q}_c}(\hat{q}(s,\underline{\hat{\jmath}}(s)))ds\notag\\
            &\qquad\qquad= \int_{t_1}^{t_2}\frac{\text{d} \mathcal{D}_{\mu_0,\ell}(t)}{\text{d} t}\cdot \mathbb{I}_{\mathcal{Q}_i\cup\mathcal{Q}_c}\bigg(\hat{q}\Big(\mathcal{D}_{\mu_0,\ell}(t),\underline{\hat{\jmath}}\big(\mathcal{D}_{\mu_0,\ell}(t)\big)\Big)\bigg)dt
        =\int_{t_1}^{t_2}\mu_{\ell}(t)\cdot \mathbb{I}_{\mathcal{Q}_i\cup\mathcal{Q}_c}\big(q(t,\underline{j}(t))\big)dt,\label{integralAATtscale}
        \end{align}
        where we used the fact that
         \begin{equation*}
            q(t,\underline{j}(t))=  q\left(\mathcal{D}_{\mu_0,\ell}^{-1}(\mathcal{D}_{\mu_0,\ell}(t)),~\underline{j} \left(\mathcal{D}^{-1}_{\mu_0,\ell}\left(\mathcal{D}_{\mu_0,\ell}(t)\right)\right)\right)=\hat{q}(\mathcal{D}_{\mu_0,\ell}(t), \underline{\hat{\jmath}}\left(\mathcal{D}_{\mu_0,\ell}(t)\right)).
         \end{equation*}
        \noindent(d) Consider any hybrid arc $q$ satisfying \eqref{BUADT} and \eqref{BUAAT}. Using $\mathcal{D}_{c,\ell}^{-1}$, we can reverse the transformations used to obtain \eqref{proof:ADTBound} and \eqref{integralAATtscale}, which yields a hybrid arc $\hat{q}=\mathcal{W}^{-1}(q)$ satisfying \eqref{proof:lemmaADT1} and \eqref{proof:AATBound}. By \cite[Ex. 2.15]{bookHDS} and \cite[Lemma 7]{poveda2017framework}, every hybrid arc satisfying these bounds can be generated by the HDS $\hat{\mathcal{H}}_{\sigma}$. Then, Lemma \ref{lemma:EquivalenceOfSolutions:statement} ensures that the original arc $q$ can be generated by the HDS $\mathcal{H}_\sigma$.\strut\hfill$\blacksquare$        %
    \end{proofLemmaADTAAT}
    We are finally prepared to present the proof of the main result of this paper.
    \begin{proofTheoremPTCL} To obtain the result, we first analyze the stability of $\mathcal{A}$ under the target HDS $\hat{\mathcal{H}}$. To this end, let $W\left(\hat{z}\right)=\frac{1}{2}|\hat{\vartheta}|^2$. Since $\phi(\cdot)$ is uniformly bounded by assumption, there exists $\overline{\eta}>0$ such that $|\eta_{\hat{q}}(\hat{\tau},\hat{u})|\le \overline{\eta}|\hat{u}|$, for all $\hat{q}\in\mathcal{Q}$.  Then, for all  $(\hat{z},\hat{u})\in C_z\times \mathbb{R}$ and $\hat{f}\in \hat{F}(\hat{z},\hat{u})$, when the data in use is not corrupt, i.e., when $q\in \mathcal{Q}_s\cup \mathcal{Q}_i$, the function $W$ satisfies
        \begin{align}
            \left\langle \nabla W\left(\hat{z}\right),~\hat{f}\right\rangle &= -\hat{\vartheta}^{\top} \left(k_{\text{t}}\cdot\Xi(\hat{\tau}) + k_{\text{r}}\cdot\Phi_{\hat{q}}\right)\hat{\vartheta} - \hat{\vartheta}^{\top}\eta_{\hat{q}}(\hat{\tau},\hat{u})
            %
            %
            %
            \le -2\kappa_{\hat{q}}W\left(\hat{z}\right) + \left|\hat{\vartheta}\right|\overline{\eta}|\hat{u}|\label{proof:preDecresease}, 
        \end{align}
        where $\kappa_{\hat{q}}\!\coloneqq\! k_{\text{r}}\alpha_{\hat{q}}$,  and $\alpha_{\hat{q}}$ is the level of richness of the $\hat{q}^{th}$ dataset.  When $\hat{q}\in Q_s$, by using Young's inequality:
        \begin{align}
            \left\langle \nabla W\left(\hat{z}\right),~\hat{f}\right\rangle    
            %
            &\le  -\kappa_{\hat{q}}W(\hat{z}) + \frac{\overline{\eta}^2}{2\kappa_{\hat{q}}}|\hat{u}|^2,\label{proof:SRModes:wdot}
        \end{align}
        for all $\hat{f}\in \hat{F}(\hat{z}),~\hat{z}\in C$ and $\hat{q}\in\mathcal{Q}_s$. When the data is IR, inequality \eqref{proof:preDecresease} reduces to:
        \begin{align}\label{proof:IRModes:wdot}
            \left\langle \nabla W\left(\hat{z}\right), \hat{f}\right\rangle \le W(\hat{z}) + \frac{\overline{\eta}^2}{2}|\hat{u}|^2,
        \end{align}
        for all $\hat{f}\in \hat{F}(\hat{z}),~\hat{z}\in C_z,$ and $\hat{q}\in\mathcal{Q}_i$, and $\hat{u}\in \mathbb{R}^{n_u}$.
        Similarly, when the data is corrupt we obtain, for all $\hat{f}\in \hat{F}(\hat{z}),~\hat{z}\in C_z$, $u\in\mathbb{R}$, and $\hat{q}\in\mathcal{Q}_c$, that
        \begin{align}\label{proof:CorruptModes:wdot}
            \left\langle \nabla W\left(\hat{z}\right), \hat{f}\right\rangle \le \varpi W(\hat{z}) + \frac{\overline{\eta}^2}{2}|\hat{u}|^2,~~\text{ where}~~ \varpi\coloneqq 1+k_r\max_{q\in \mathcal{Q}_c}\|\Phi_q\|.
        \end{align}
        Let $\hat{\xi}\coloneqq (\underline{\kappa} + \varpi)\hat{\rho}_a$, with $\underline{\kappa}\coloneqq \min_{q\in\mathcal{Q}_s}\kappa_{\hat{q}}$. When $\hat{z}\in C_z$, the time derivative of $\hat{\xi}$ satisfies
        \begin{equation}\label{proof:dotxi}
            \frac{\text{d}\hat{\xi}}{\text{d}s} = (\underline{\kappa} + \varpi)\frac{\text{d}\hat{\rho}_{a}}{\text{d}s} \in [0, \zeta] - (\underline{\kappa} + \varpi)\mathbb{I}_{\mathcal{Q}_i\cup\mathcal{Q}_c}(\hat{q}),~~\text{where}~~\zeta\coloneqq \frac{1}{\tau_a}(\underline{\kappa} + \varpi).
        \end{equation}
         Now, consider the Lyapunov function $V(\hat{z})\coloneqq W(\hat{z})e^{\hat{\xi}}$. Using the fact that $|\hat{z}|_{\mathcal{A}} = |\hat{\vartheta}|$, it follows that $V$ satisfies the bounds $\underline{c}|\hat{z}|_{\mathcal{A}}^2 \le  V(\hat{z}) \le \overline{c}|\hat{z}|_{\mathcal{A}}^2$,
        %
        where $\underline{c}\coloneqq  1/2$, and $\overline{c}\coloneqq e^{(\varpi+\underline{\kappa})T_0}/2$.
        When $\hat{z}\in C$ and $\hat{q}\in\mathcal{Q}_s$, by using \eqref{proof:SRModes:wdot} and \eqref{proof:dotxi}, the time derivative of $V$ satisfies:
        \begin{align}
            \!\!\frac{\text{d}V}{\text{d}s} 
            &\le -\kappa_qW(\hat{z})e^{\hat{\xi}} + \frac{e^{(\varpi+\underline{\kappa})T_0}}{2\kappa_{\hat{q}}} \overline{\eta}^2|\hat{u}|^2+  V(\hat{z})\zeta
            \le -\left(\underline{\kappa} - \zeta \right)V(\hat{z}) +  \gamma|\hat{u}|^2,~~\text{where}~~\gamma \coloneqq \overline{c}\cdot\bar{\eta}^2/\min\{1,\underline{\kappa}\}.\label{proof:SRmodes:vdot}
        \end{align}
      Similarly, by using \eqref{proof:IRModes:wdot},\eqref{proof:CorruptModes:wdot}, and \eqref{proof:dotxi}, whenever $\hat{z}\in C_z$ and $\hat{q}\in\mathcal{Q}_i\cup\mathcal{Q}_c$, the time derivative of $V$ satisfies the bound \eqref{proof:SRmodes:vdot}.
        %
        %
        %
        %
        Since $\tau_a > 1 + \frac{\varpi}{k_{\text{r}}\underline{\alpha}}$, it follows that $\underline{\kappa} -\zeta >0$.
        On the other hand, when $\hat{z}\in D$, we have that $V_{\hat{q}^+}\left(\hat{z}^+\right) = W(\hat{z}^+)e^{\hat{\xi}^+}\leq V(\hat{z})$, i.e., $V$ does not increase during jumps. Using
        \cite[Lemma 9]{switchedPT}, we obtain:
         \begin{align}
        |\hat{z}(s,j)|_{\mathcal{A}}\leq \kappa_1|\hat{z}(0,0)|_{\mathcal{A}}e^{-\kappa_2(s+j)}+\kappa_3\sup_{0\le \nu\le s}|\hat{u}(\nu)|,\label{proof:ISS:stimescale}
        \end{align}
        with $\kappa_1 = e^{\frac{\lambda}{4}\frac{\tau_d}{1+\tau_d}\frac{(\varpi+\underline{\kappa})T_0}{2}N_0}$, $\kappa_2=\frac{\lambda}{4}\frac{\tau_d}{1+\tau_d}$, and $\kappa_3=2\left(\frac{\gamma}{\lambda}\right)^{\frac{1}{2}}$, where $\lambda\coloneqq\underline{\kappa}-\zeta>0$ by the assumption on $\tau_a$. 
        By Lemma \ref{lemma:EquivalenceOfSolutions:statement}, together with \eqref{proof:ISS:stimescale}, every maximal solution $(\tilde{z},u)\in \mathcal{S}_{\mathcal{H}}$ satisfies:
        \begin{equation}\label{thm:proof:prebound}
        |\tilde{z}(t,j)|_{\mathcal{A}}\leq \kappa_1|\tilde{z}(0,0)|_{\mathcal{A}}e^{-\kappa_2(\mathcal{D}_{\mu_0,\ell}(t)+j)}+\kappa_3\sup_{0\le \nu\le t}|u(\nu)|,
        \end{equation}
        which implies that $\mathcal{A}$ is HE-ISS$_F$ for $\mathcal{H}$ when $\ell=1$, and PT-ISS$_F$ for $\mathcal{H}$ when $\ell \in (1,\infty]$.

        Finally, we obtain \eqref{thm:bound} directly from \eqref{thm:proof:prebound}, by noting that $\left|\left(y(t,j), \tau(t,j)\right)\right|_{\mathcal{A}_0} = 0$ for every solution $\tilde{z}$ of $\mathcal{H}$ and $(t,j)\in \text{dom}(\tilde{z})$ with $t\le T_{\mu_0,\ell}$, using the fact that fact that $\left|\vartheta\right| = \left|\theta - \theta^{\star}\right|$, and setting $u_1=d(\tau)$, $(u_{2,q})_k = d(t_{q,k})$, and $u_{3} = -\Phi_q\theta^\star +  \Psi_q$.\hfill $\blacksquare$
    \end{proofTheoremPTCL}
%
%
\section{Numerical Example}
\label{sec:numexample}
\begin{figure*}[t]
    \centering
    \includegraphics[width=0.99\linewidth]{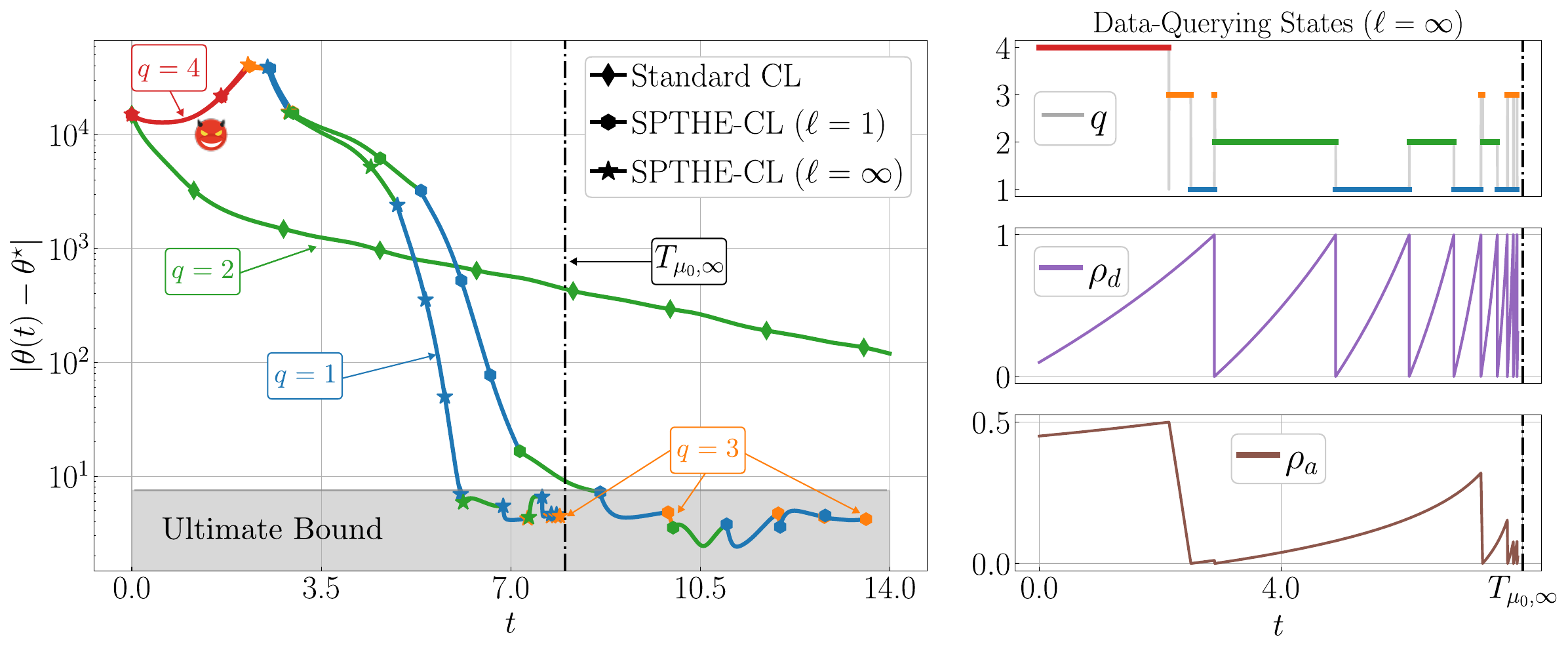}
    \caption{Comparison between standard concurrent learning with fixed dataset $\Delta_2$, and switching prescribed-time ($\ell=\infty$) and hyperexponential ($\ell=1$) concurrent Learning with informative data $\mathcal{Q}_s = \{1,2\}$, uninformative data $\mathcal{Q}_i=\{3\}$, and corrupt data $\mathcal{Q}_c=\{4\}$. The true parameter is $\theta^{\star} = (1,-2,1)$, and the prescribed-time $T_{\mu_0,\infty}=8$ for the $\ell=\infty$ case.}
        \label{fig:ptCL}
\end{figure*}
To illustrate the performance of the proposed SPTHE-CL algorithm and the bounds established in Theorem \ref{thm:PTCL}, consider the scalar-valued signal:
\begin{equation*}
\psi(\theta^{\star},\tau) = \left(\sin(\tau) - 1\right)^2 + d(\tau),
\end{equation*}
with $d(\tau)=\frac{1}{4}\tanh(\tau)$, and $|d|_\infty = \bar{d} =\frac{1}{4}$. Here, $\psi$ takes the form of \eqref{statement:parameterizedSignal} with $\theta^{\star} = (1,-2,1)$ and $\phi(\tau) = (1,\sin(\tau),\sin(\tau)^2)$. We consider four datasets $\{\Delta_q\}_{q=1}^4$ recorded at sequences $\{t_{1,k}\}_{k=1}^{\overline{k}_1} = \left\{0,-\frac{\pi}{2},-\frac{3\pi}{2}\right\}$, $\{t_{2,k}\}_{k=1}^{\overline{k}_2} = \left\{0,-\frac{\pi }{4},-\frac{7\pi}{4}\right\}$, $\{t_{3,k}\}_{k=1}^{\overline{k}_3}= \{0, -\pi , -2\pi\}$, and $\{t_{4,k}\}_{k=1}^{\overline{k}_4}= \{0, -\frac{\pi}{7} , -\frac{\pi}{5}\}$. Datasets $\Delta_1$ and $\Delta_2$ are SR with richness levels $\alpha_1 =0.44$ and $\alpha_2=0.15$, while $\Delta_3$ is IR, and $\Delta_4$ is corrupted with data matrix $\Phi_4 = [0.6,~ 0.3,~ 0.4; 0.3,~ 1,~ 0.3; 0.7,~0.5,~0.4].$

We implement the SPTHE-CL dynamics for both $\ell=1$ (HE case) and $\ell=\infty$ (PT case), with $\Upsilon=8$ and $\mu_0$ chosen for prescribed-time $T_{\mu_0,\infty}=8$, and with $k_t=k_r=1$. 
The switching signal $q$ satisfying the D-ADT and D-AAT conditions, is generated using the Data-Querying automaton \eqref{DQAutomaton:HDS} with parameters $T_0 = 1$, $N_0=2$, $\tau_d = 2$, and $\tau_a = 25$. These parameters satisfy the conditions of Theorem \ref{thm:PTCL}, namely $\tau_d > 0$ and $\tau_a > 1 + \frac{\varpi}{\underline{\alpha}}$, where $\underline{\alpha} = \min_{q\in\mathcal{Q}_s} \alpha_q = 0.15$ and $\varpi = 1 + \|\Phi_q\|$.

In Figure \ref{fig:ptCL}, we present the trajectories of the estimation error $|\theta-\theta^\star|$. For the case $\ell=\infty$, we plot the switching sequence $q$ and its associated states $\rho_d$ and $\rho_a$, where the increasing switching frequency as $t$ approaches $T_{\mu_0,\infty}$ is admissible under the D-ADT and D-AAT conditions \eqref{BUADT} and \eqref{BUAAT}. The gray area in the left subfigure depicts the uniform ultimate bound established in Theorem \ref{thm:PTCL} when $|d|_{\infty}<\bar{d}$.
For comparison, we also implement a standard CL algorithm with constant scaling $\mu\equiv1$ and fixed dataset $\Delta_2$ (system \eqref{HDS:tjTimeScale} without the Data-Querying automaton). As shown in the figure, for $\ell=\infty$ the estimation error rapidly approaches the ultimate bound as $t$ approaches $T_{\mu_0,\infty}$, even with IR or corrupted data. This demonstrates the algorithm's robustness to dynamic dataset changes during execution. The $\ell=1$ case shows similar fast convergence but continues beyond the prescribed-time, avoiding finite blow-up while maintaining rapid convergence. In contrast, the standard CL exhibits significantly slower convergence and fails to reach the ultimate bound within the simulation timeframe.

\section{Conclusions}
In this paper, we presented a novel switched prescribed-time and hyperexponential concurrent learning (CL)  algorithm that addresses two critical challenges in CL implementations: the dependence of convergence rates on dataset richness and the practical need to handle multiple, potentially corrupted datasets. The proposed framework unifies hyperexponential and prescribed-time convergence for CL dynamics under a single dynamic gain structure, allowing practitioners to either enhance exponential convergence rates or to prescribe exact convergence times that are independent of initial conditions and dataset informativity.

Our theoretical analysis leverages tools from hybrid dynamical systems to model switching signals through an autonomous data-querying automaton while incorporating a dilation/contraction transformation on hybrid time domains that proves crucial for handling both finite-time blow-up and exponentially growing gains. The convergence and stability guarantees of our algorithm are preserved even when operating with datasets that are not sufficiently rich or have been corrupted, making it particularly relevant for IoT and edge computing applications where data integrity cannot always be guaranteed.

Our results extend previous work on blow-up gains and bijections between solution sets to accommodate ISS-type bounds, providing a theoretical framework for analyzing prescribed-time and hyperexponential CL algorithms in the presence of noise and corrupted data. Future research directions include experimental validation under various practical scenarios, such as noisy sensors, limited computation resources, and saturated actuators.

\bibliographystyle{elsarticle-num}
\bibliography{references.bib}

\end{document}